\documentclass[10pt]{amsart}
\usepackage[utf8]{inputenc}
\usepackage{amsmath,amsfonts,amssymb,amsthm,amscd,amsbsy,epsfig,array,mathrsfs,tikz,tikz-cd,url}
\usetikzlibrary{positioning}
\usetikzlibrary{decorations.pathmorphing}
\usetikzlibrary{decorations.markings}

\newcommand{\set}[1]{ \{1,\ldots,   {#1} \} }
\newcommand{\m}[1]{\mathcal{#1}}
\newcommand{\mb}[1]{\mathbb {#1}}

\newcommand{\ti}[1]{\widetilde {#1}}

\DeclareRobustCommand{\stirling}{\genfrac\{\}{0pt}{}}

\theoremstyle{plain}
\newtheorem{thm}{Theorem}[section]
\newtheorem*{thm*}{\bf Theorem }

\newtheorem{prop}[thm]{Proposition}
\newtheorem*{prop*}{\bf Proposition}

\theoremstyle{definition}
\newtheorem{defn}[thm]{Definition}
\newtheorem{conj}[thm]{Conjecture}
\newtheorem{example}[thm]{Example}

\theoremstyle{remark}
\newtheorem{rem}[thm]{Remark}

\numberwithin{equation}{section}

\title[Moduli spaces of colored graphs]{Moduli spaces of colored graphs}

\author{Marko Berghoff} \email{berghoff@math.hu-berlin.de}

\author{Max M{\"u}hlbauer} \email{muelbaum@physik.hu-berlin.de}

\begin{document}

\begin{abstract}
We introduce moduli spaces of colored graphs, defined as spaces of non-degenerate metrics on certain families of edge-colored graphs. Apart from fixing the rank and number of legs these families are determined by various conditions on the coloring of their graphs. The motivation for this is to study Feynman integrals in quantum field theory using the combinatorial structure of these moduli spaces. Here a family of graphs is specified by the allowed Feynman diagrams in a particular quantum field theory such as (massive) scalar fields or quantum electrodynamics. The resulting spaces are cell complexes with a rich and interesting combinatorial structure. We treat some examples in detail and discuss their topological properties, connectivity and homology groups.
\end{abstract}
\maketitle

\section{Introduction}

The purpose of this article is to define and study moduli spaces of colored graphs in the spirit of \cite{hv} and \cite{chkv} where such spaces for uncolored graphs were used to study the homology of automorphism groups of free groups. Our motivation stems from the connection between these constructions in geometric group theory and the study of Feynman integrals as pointed out in \cite{bk-cr}. Let us begin with a brief sketch of these two fields and what is known so far about their relation.

\subsection{Background} 
The analysis of Feynman integrals as complex functions of their external parameters (e.g.\ momenta and masses) is a special instance of a very general problem, understanding the analytic structure of functions defined by integrals. That is, given complex manifolds $X,T$ and a function $f:T \rightarrow \mb C$ defined by
\begin{equation*}
 f(t)=\int_{\Gamma} \omega_t,
\end{equation*}
what can be deduced about $f$ from studying the integration contour $\Gamma \subset X $ and the integrand $\omega_t \in \Omega(X)$?

There is a mathematical treatment for a class of well-behaved cases \cite{pham}, but unfortunately Feynman integrals are generally too complicated to answer this question thoroughly. They have however enough (combinatorial) structure allowing for the deduction of partial results by various methods, although many have not yet been put on a rigorous mathematical footing.\footnote{For what is known, \cite{analsm} is the classic reference for physicists while \cite{hwa} advocates a more mathematical point of view.} For example, Cutkosky's theorem \cite{cr}, relating the imaginary part of a Feynman integral to a simpler integral over generalized residues (in physics terms, certain \textit{edge propagators put on mass-shell}), was just recently proven in \cite{bk-cr}. Along the way, Bloch and Kreimer mention the idea of studying \textit{Outer space} and variants thereof to gain new insights into the analytic structure of Feynman integrals.

Outer space $CV_n$ is a topological space that arises in geometric group theory where it is used to study the automorphism group of a free group $F_n$. Inspired by ideas from Teichm\"uller theory, it is constructed as a space in which points are equivalence classes of marked metric graphs. The group $\mathrm{Out}(F_n)=\mathrm{Aut}(F_n)/\mathrm{Inn}(F_n)$ outer automorphisms of $F_n$ acts on $CV_n$ by permuting these markings \cite{cv}. The quotient of $CV_n$ by this action is a moduli space for connected metric graphs of rank $n$ without vertices of valence one or two (or genus $n$ \textit{tropical curves}, see \cite{caporaso}). Moreover, it is a rational classifying space for $\mathrm{Out}(F_n)$, hence computes the group homology $H_*(\mathrm{Out}(F_n);\mb Q)$.  

There are many homotopy equivalent models for Outer space. For instance, $CV_n$ deformation retracts to its so-called \textit{spine}, a simplicial complex whose elements naturally assemble into cubes. This produces a cubical complex with its elements represented by pairs $(G,F)$ of (marked) graphs  and spanning forests $F\subset G$. Furthermore, the group action behaves nicely on the spine of Outer space, allowing to set up a cubical chain complex to compute $H_*(\mathrm{Out}(F_n);\mb Q)$ as the (cubical) homology of the moduli space of rank $n$ graphs. 

The connection to physics, as proposed by Bloch and Kreimer, is established by the fact that this cubical complex also captures the combinatorial structure of \textit{cut} and \textit{reduced graphs} which show up in the study of Feynman integrals as complex functions of their external parameters. More precisely, to any pair $(G,F)$ where $F$ is a spanning forest of $G$ they associate two new graphs, a \textit{reduced graph} obtained by collapsing all edges of $G$ which do not connect different components of the spanning forest $F$ and a \textit{cut graph} where all those edges connecting different components are put \textit{on-shell} (see \cite{dklpz} for details and examples). This data allows to analyse a Feynman integral through the study of simpler integrals, i.e.\ to determine its singular loci and branch cuts together with their associated discontinuities.

On the other hand, the same data describes the boundary operator in the cubical chain complex for $CV_n$ or $CV_n / \mathrm{Out}(F_n)$, respectively.
Hence, the combinatorial topology of these spaces seems to be related to the analytic structure of Feynman integrals. So far this holds at least for every cell, i.e.\ every pair $(G,F)$ (see \cite{bk-cr,dklpz}), understanding the relations between neighboring cells is the main motivation for the present article. 

\subsection{Moduli spaces of colored graphs} In contrast to the elements in Outer space or the moduli space of graphs, physicists usually consider Feynman diagrams as graphs with additional structure. Depending on a chosen theory one needs to distinguish between different masses or particle types assigned to graph edges. Moreover, there are rules for which particles may interact, so that not all vertex types will be allowed. Such additional data can be represented, as a first approximation, by coloring the edges of a graph. Adjusting the definitions to this case we obtain moduli spaces parametrizing (isomorphism classes of) colored metric graphs. 
These spaces share structures similar to the uncolored case, e.g.\ the cubical decompositions described above, allowing to mimic the ideas of \cite{hv} to compute their homology groups algorithmically.

The idea (or hope) is that understanding the combinatorics and topology of these spaces will give new insight into the study of Feynman integrals. A closer look at this connection and its applications will be pursued in future work, including a comparison to the algebraic approach to Feynman integrals via \textit{coactions} that has received quite some attention lately (see for instance \cite{coact}). 

In the present paper we focus on the mathematical properties of these spaces which are pretty interesting in their own right. 
While we give rather general definitions, our concrete results are mainly focused on three special types of moduli spaces for rank one graphs
\begin{enumerate}
 \item with arbitrary colorings of their edges by a fixed number of colors $m$, 
 \item which are \textit{holocolored}, i.e.\ each edge has a different color,
 \item which are holocolored with the additional feature that edges keep the information of their coloring when collapsed. 
 \end{enumerate}
 In physics terms we think of the first type as a moduli space for all possible Feynman diagrams in a theory with $m$ different scalar particles. The second type models the ``generic case'' thereof where only diagrams with different masses occur (this case is much simpler from an analytic perspective). The third space of holocolored graphs with ``remembered edges'' is built to model a diagrammatic account of the \textit{operator product expansion} (see for example \cite{lh75} or \cite{iz}). Here, in contrast to the analysis of a Feynman integral by looking at its cut and reduced graphs, the type of a new vertex formed by collapsing an edge depends on the label of the latter. To model this effect we therefore consider a space of graphs where edges of zero length still carry a color.

\subsection{Results} In the three above mentioned cases we study the homology groups of these spaces. Our main computational tool, generalizing the ideas of \cite{hv} to the colored case, is a cubical chain complex that allows to compute their rational homology with computer assistance. The results can be found in Tables \ref{t:homdimc}, \ref{t:hom_dirk} and \ref{t:hom_dirk2} below.
Moreover, we discuss some special cases in detail and derive a couple of general results for the case of rank one graphs, showing that 
\begin{itemize}
\item all moduli spaces of arbitrary colored graphs are simply connected. 
\item the computer results suggest that the homology groups of these moduli spaces are independent of the number of allowed colors, except in the top dimensional rank. We comment on some partial results to prove this conjecture.  
\item the highest non-trivial Betti number of the moduli space of $m$-colored graphs with $s$ legs is given by a polynomial in $m$ of degree $s$. 
 \item for both types of moduli spaces of holocolored graphs with $s$ legs we obtain the highest non-trivial homology group by an explicit construction, showing that it is isomorphic to $\mathbb{Z}^{\frac{1}{2}(s-1)!}$.
 \item for all three types of moduli spaces we calculate the Euler characteristic. 
\end{itemize}

\subsection{Organisation} Section \ref{s:bg} serves as a quick reminder on graphs and cell complexes, setting up the necessary definitions and notational conventions used throughout this work. In Section \ref{s:ms} we recall the notion of moduli spaces of graphs in the classical sense and discuss their most important properties. Then we define moduli spaces of colored graphs formally and consider several examples of special colorings in detail. The remaining two sections are concerned with the topology of these spaces. In Section \ref{s:mc} we study the case of arbitrarily colored graphs. For this we introduce a cubical chain complex that computes the rational homology of these spaces. Then we list and discuss the results of these calculations for the rank one case and prove some general statements on homological and homotopical properties of these spaces.
Section \ref{s:hc} deals with holocolored graphs, with or without remembered edges. We display the results of computer calculations in the cubical complex as well as some direct results obtained by algebraic and combinatorial methods, i.e.\ calculation of the top dimensional homology groups and the Euler characteristics. 
\newline

\textbf{Acknowledgements.} Both authors owe many thanks to Dirk Kreimer, not only for sparking interest in this topic with his original ideas, but also for steady support and encouragement. We also thank Sam Yusin for helpful discussions. M.B.\ thanks Karen Vogtmann for all the valuable advice and discussions and her hospitality during his stay at the \textit{2017-2018 Warwick EPSRC Symposium on Geometry, Topology and Dynamics in Low Dimensions}.

\section{Preliminaries}\label{s:bg}
Let us start by introducing the basic definitions and notational conventions for the central objects in this article, graphs and cell complexes. This is standard material, but as some definitions vary slightly in the literatur, we recall them here.

\subsection{Graphs}
Graphs are very versatile mathematical objects that show up in a variety of fields including discrete mathematics, computer science and quantum field theory. They prominently arise in the perturbative approach to the latter in form of Feynman diagrams which represent integrals contributing to probability amplitudes in high-energy particle scattering processes (see for example \cite{iz}).

There are various possibilities to define graphs,\footnote{Here we always consider \textit{non-empty finite multi-graphs}.} each suited for different purposes. Physicists often use a definition based on half-edges which allows for a distinction between internal and external edges as needed for Feynman diagrams.  

\begin{defn}
 A \textit{graph} is a tuple $G=(V,H,s,c)$ consisting of a set of vertices $V=V(G)$, a set of half-edges $H=H(G)$, a map $s=s_G:H \rightarrow V$ which connects each half-edge to its source vertex and a map $c=c_G:H \rightarrow H$ with $c^2=\text{id}_H$ that connects half-edges with each other; if $h_1 \neq h_2\in H$ are two distinct half-edges with $c(h_1)=h_2$, the pair $\{h_1,h_2\}$ is called an (internal) edge\footnote{By using ordered pairs $(h_1,h_2)$ one obtains oriented edges; $G$ is then called a \textit{directed graph}.} of $G$, otherwise $h_1=h_2$ is called an external edge, leg or hair. We denote the set of internal edges of $G$ by $E(G)$.
  
A subgraph $\gamma \subset G$ is a graph such that $V(\gamma) \subset V(G)$, $H(\gamma)\subset H(G)$ and $s_\gamma=s_G|_{H(\gamma)}$, $c_\gamma=c_G|_{H_\gamma}$. Subgraphs can have external legs, but we will only consider so-called internal subgraphs, i.e.\ subgraphs without legs.
\end{defn}

The following notations occur frequently throughout this article.
\begin{itemize}
\item The \textit{rank} or \textit{loop number} of $G$ is denoted by $|G|$ or $h_1(G)$ (the first Betti number of $G$ viewed as a CW-complex).
\item For any vertex $v\in V(G)$ we denote its \textit{valency} by $|v|:=|s^{-1}(v)|$.
\item $G$ is called \textit{one-particle irreducible (1PI)}, \textit{bridge-free} or \textit{core} if it is connected and still connected upon removal of any internal edge $e\in E(G)$. In case $G$ is not 1PI, the edges leaving the graph disconnected upon removal are called \textit{bridges} or \textit{separating edges}.
\item A graph $G$ is called \textit{admissible} if it is 1PI and $|v|\geq3$ for all $v\in V(G)$.
\item A graph $F$ is called a \textit{$k$-forest} if it has no loops, i.e.\ $|F|=0$, and $k$ connected components. In particular, a 1-forest is called a \textit{tree}. A subgraph $F\subset G$ is called a \textit{spanning $k$-forest} if $F$ is a $k$-forest and $V(F)=V(G)$. If $k=1$, then $F$ is said to be a \textit{spanning tree}.
\item A graph with a single vertex, $n$ internal edges and $s$ legs is called a \textit{rose with $n$ petals (and $s$ thorns)} and denoted by $R_{n,s}$.
\end{itemize}

Sometimes, especially for topological considerations, it will be more convenient to think of graphs as one dimensional $CW$-complexes. In this case legs can be modeled as attached to univalent vertices or as additional labels (basepoints) on some vertices of $G$ (the definition of admissibility has then to be adjusted, allowing for univalent vertices or for labeled bivalent vertices, respectively). 
\newline

In the following we will need two operations on graphs, removing and contracting edges.
\begin{defn}
Let $G$ be a graph and $\gamma\subset G$ an (internal) subgraph.
\begin{itemize}
\item $G \backslash\gamma$ denotes the graph $G$ with all edges of $\gamma$ removed, i.e.\ $V(G\backslash\gamma)=V(G)$ and $E(G\backslash\gamma)=E(G)\setminus E(\gamma)$.
\item $G/ \gamma$ is the graph $G$ with all edges of $\gamma$ collapsed: For $\gamma$ connected, $G/ \gamma$ is obtained from $G$ by collapsing $\gamma$ to a single vertex, i.e.\ $\gamma$ is replaced by a vertex with all edges connecting $\gamma$ and $G\backslash \gamma$ attached to it. 
If $\gamma$ is not connected, $G/ \gamma$ is defined by shrinking each connected component in this manner.
\end{itemize}
\end{defn}
The case where the subgraph $\gamma$ is a forest will occur frequently in this text. In that case the number of loops does not change when contracting $\gamma$, $|G /\gamma|=|G|$. In particular, if $\gamma$ is a spanning tree, then $G/\gamma$ is a rose with $|G|$ petals.
\newline

So far we have considered graphs as purely combinatorial objects. To obtain more structure, a graph can be endowed with a metric by means of a length function that assigns to each edge a positive real number. 
\begin{defn}
A \textit{metric graph} is a pair $(G,\lambda)$ where $G$ is a graph and $\lambda$ a map $\lambda:E(G)\to\mathbb{R}_{>0}$ giving each edge of $G$ a length. The \textit{volume} of $G$ is defined as the sum of all edge lengths,
\begin{equation*}
 \mathrm{vol}_\lambda(G):=\sum_{e\in E(G)}\lambda(e).
\end{equation*}
\end{defn}

Thus, given a metric graph $G$ the distance between two points can be defined as the minimum length of a path connecting them, turning $G$ into a metric space.

If we additionally allow for an edge $e\in E(G)$ to have zero length, we typically identify $(G,\lambda)$ with the contracted graph $(G/e,\lambda_{|E(G)\setminus\{e\}})$.
\newline

Lastly, we need a notion of maps between graphs that makes sense in both the combinatorial and topological setting.
\begin{defn}
Let $G,G'$ be graphs. By a map $f: G \rightarrow G'$ we mean a cellular map between $G$ and $G'$ viewed as CW-complexes (see Definition \ref{defn:cw} below).
In addition, we require $f$ to map legs to legs or basepoints to basepoints, respectively.
\end{defn}

\subsection{Cell complexes}
All spaces we encounter in this work are cell complexes, built from cells of various types. In this section we give a short account of these types, taking a geometric approach following \cite{ah} and \cite{switzer}.

\subsubsection{Semi-simplicial and cubical complexes} First, we consider spaces that decompose into simplices. 

\begin{defn}\label{def:deltacomplex}
A topological space $X$ together with a collection of continuous maps $\sigma_\alpha:\Delta^n\to X$ (with $n\in\mathbb{N}$ dependent on $\alpha$) is called a \textit{semi-simplicial} or \textit{$\Delta$-complex} if
\begin{itemize}
\item All restrictions of $\sigma_\alpha$ to the interior of $\Delta^n$ are injective such that each $x\in X$ is in the image of exactly one such restriction.
\item For all $\sigma_\alpha:\Delta^n\to X$ the restriction to any face is a map $\sigma_\beta:\Delta^{n-1}\to X$.
\item For any $\alpha$, any $A\subset X$:\, $\sigma_\alpha^{-1}(A)$ is open $\iff$ $A$ is open.
\end{itemize}
\end{defn}

Obviously, one may take also other types of building blocks to decompose a given space. For instance, by replacing simplices with cubes. This method has certain computational advantages, some of which are utilized in Sections \ref{s:mc} and \ref{s:hc}. 

Let $I$ denote the interval $[0,1]\subset\mathbb{R}$ and define the \textit{standard $n$-cube} as the product of intervals $\square^n:=I^n=[0,1]\times\ldots\times[0,1]$ and $\square^0 :=\{0\} $. 
Given a topological space $X$ we want to decompose it into an union of such cubes. The direct analog to a simplicial complex is to restrictive for the purpose of this work. Therefore, we mimic the above definition of a $\Delta$-complex:

\begin{defn}
 A \textit{(singular) $n-$cube} in $X$ is a continuous map $\sigma:\square^n\rightarrow X$. If $\sigma$ is injective, the cube is called \textit{regular}, otherwise \textit{degenerate}. 

 The \textit{$i$-th (primary) faces} of a cube $\sigma: \square^n \rightarrow X$ are defined as the maps
 \begin{equation*}
    f^i_+\sigma:\square^{n-1}\longrightarrow X, \ (x_1, \ldots, x_n) \longmapsto \sigma(x_1, \ldots, x_{i-1},0,x_{i+1},\ldots, x_n) 
 \end{equation*}
and
\begin{equation*}
 f^i_-\sigma :\square^{n-1}\longrightarrow X, \ (x_1, \ldots, x_n) \longmapsto \sigma(x_1, \ldots, x_{i-1},1,x_{i+1},\ldots, x_n).
\end{equation*}
\end{defn}

\begin{defn}
A topological space $X$ together with a collection of continuous maps $\sigma_\alpha:\square^n\rightarrow X$ (with $n\in\mathbb{N}$ dependent on $\alpha$) is called a \textit{cubical complex} if
\begin{itemize}
\item All restrictions of $\sigma_\alpha$ to the interior of $\square^n$ are injective such that each $x\in X$ is in the image of exactly one such restriction.
\item For every $\sigma_\alpha:\square^n\rightarrow X$ the restriction to any primary face is a map $\sigma_\beta:\square^{n-1}\rightarrow X$.
\item For any $\alpha$ and any $A\subset X$:\, $\sigma_\alpha^{-1}(A)$ is open $\iff$ $A$ is open.
\end{itemize}
\end{defn}

For a more detailed treatment of cubical complexes the interested reader is referred to \cite{massey}.

\subsubsection{CW-complexes} By replacing simplices or cubes by disks $D^n:=\{ x\in \mb R^n \mid ||x||\leq 1 \}$ and allowing much more general gluing maps we obtain the notion of \textit{CW-complexes}. Instead of a formal definition, we give a building recipe for these kind of spaces (as in \cite{ah}, for a precise definition see \cite{switzer}).

\begin{defn}\label{defn:cw}
 A \textit{CW- or (regular) cell complex} is a space $K$ constructed inductively as follows.
 \begin{itemize}
  \item Start with a discrete set $K^{(0)}$, the set of \textit{$0$-cells} of $K$.
  \item Inductively form $K^{(n)}$ from $K^{(n-1)}$ by attaching $n$-cells $D^n_\alpha$ via maps continuous maps $\varphi_\alpha:S^{n-1} \approx \partial D^n_\alpha \rightarrow K^{(n-1)}$. Thus, $K^{(n)}$ is the quotient space of $K^{(n-1)} \sqcup_\alpha D^n_\alpha$ under the relation $x \sim \varphi_\alpha(x)$ for $x \in \partial D^n_\alpha$. The \textit{$n$-cells} $e^n_\alpha$ of $K$ are the homeomorphic images of $ D^n_\alpha \setminus \partial D^n_\alpha$ under this quotient map.
  \item $K= \cup K^{(n)}$ with the weak topology: A set $U\subset K$ is open if and only if $U\cap K^{(n)}$ is open for all $n$.  
 \end{itemize} 
The spaces $K^{(n)}$ are referred to as the \textit{$n$-skeleta} of $K$. Given two CW-complexes, a continuous map $f:K \rightarrow L$ is called \textit{cellular} if $f(K^{(n)}) \subset L^{(n)}$ for all $n$.
\end{defn}

Clearly, cell complexes provide a very flexible setting to deal with topological spaces. One major advantage is that for a CW-decomposition of a given space much fewer cells are needed than in the simplicial or $\Delta$ setting. 

\subsection{Cubical homology} Given any of the above types of decompositions of a space $X$ there is a corresponding chain complex whose homology is isomorphic to the singular homology of $X$. Simplicial and cellular homology are well-known, but one may also use cubes to calculate $H_*(X)$. The chain complex $(C_*^\square(X),\partial_*)$ associated to a cubical complex $(X,\{\sigma_\alpha\}_{\alpha\in A})$ is constructed by defining the chain groups $C_n^\square(X)$ to be the free abelian groups generated by all (regular) cubes $\sigma_\alpha:\square^n\rightarrow X$. The boundary morphism $\partial_n^\square$ acts linearly on the $n$-chains and its action on a single generator $\sigma_\alpha$ is defined by
\begin{equation}\label{eq:cubedelta}
\partial^\square \sigma_\alpha := \partial_+^\square \sigma_\alpha + \partial_-^\square \sigma_\alpha,
\end{equation}
where
\begin{align*}
\partial_+^\square \sigma_\alpha  &:= \sum_{i=1}^{n}(-1)^{i-1} f_+^i \sigma_\alpha =\sum_{i=1}^{n}(-1)^{i-1} \sigma_\alpha|_{I^{i-1} \times \{0\} \times I^{n-i}},\\
\partial_-^\square \sigma_\alpha  & := \sum_{i=1}^{n}(-1)^{i} f_-^i \sigma_\alpha =\sum_{i=1}^{n}(-1)^i \sigma_\alpha|_{I^{i-1} \times \{1\} \times I^{n-i}}.
\end{align*}

This produces a chain complex whose homology is isomorphic to the singular homology of $X$.
It must be remarked though that caution is required when dealing with degenerate cubes, i.e.\ cubes $\sigma:\square^n\rightarrow X$ which are not injective (cf.\ \cite{massey}). The treatment of this technicality is omitted here since it does not occur in the cases considered in this work.

\section{Moduli spaces of graphs}\label{s:ms}
Moduli spaces of (uncolored) graphs can be defined as quotients of \textit{Culler-Vogtmann Outer space} $CV_n$ and generalizations thereof. Points in the latter are tuples $(G,\lambda,g)$ where $(G,\lambda)$ is a metric graph of rank $n$ and $g$ a \textit{marking}, a (homotopy class of a) homotopy equivalence between $G$ and the rose graph $R_n$ \cite{cv}. Roughly speaking, Outer space $CV_n$ is a moduli space of marked metric graphs that is equipped with an action of $\mathrm{Out}(F_n)$ which acts by changing the markings. Defining a moduli space of graphs as the orbit space of this action has certain advantages,\footnote{The same holds for an algebro-geometric approach via tropical curves, see for example the survey in \cite{caporaso}.} but for the sake of brevity and having the application to Feynman diagrams in mind we stick to a more direct definition. Nevertheless, our construction is heavily inspired and conceptually quite close to the case of Outer space and its generalizations.

For more on the ``approach from Outer space'' we refer to the survey in \cite{v-topgeo}.

\subsection{The uncolored case}

For $n>1$ let $(G,\lambda)$ denote an admissible graph $G=(V,E)$ of rank $n$ and without legs, employed with a metric $\lambda: E \rightarrow \mb R_{\geq 0}$. We define an equivalence relation on the set of such metric graphs by declaring $(G,\lambda)$ to be equivalent to $(G',\lambda')$ if the metrics differ only by a scaling factor. More precisely, if $Z\subset E$ and $Z' \subset E'$ denote the sets of edges in $G$ and $G'$ on which $\lambda$ and $\lambda'$ vanish, then $\lambda' \circ \varphi \propto  \lambda$ for an isomorphism $\varphi : G/Z \to G'/Z'$,
\begin{equation}\label{eq:equiv}
 (G,\lambda) \sim (G',\lambda') \Longleftrightarrow \exists \ \varphi: G/Z \xrightarrow{\sim} G'/Z' , c>0  \text{ s.t. } \lambda' \circ \varphi= c \text{ on } E\setminus Z.
\end{equation}

This relation allows to consider metric graphs with their volume normalized to one. In addition, if $\lambda$ vanishes on some $Z\subset E$ we identify $(G,\lambda)$ with the contracted graph $G/Z$ and $\lambda_{|E\setminus Z}$ normalized appropriately. If $\lambda$ vanishes only on forests $F\subset E$, the metric is called \textit{regular}, otherwise \textit{degenerate}.

\begin{defn}
 The \textit{moduli space of rank $n$ graphs} $\m {MG}_{n}$ is defined as
 \begin{equation*}
  \m {MG}_n := \big\{ (G,\lambda) \mid G \text{ admissible with } |G|=n, \lambda \text{ a regular metric on }G \big\}_{ \big/\sim}.
 \end{equation*}
\end{defn}

As a topological space $\m {MG}_n$ is best understood by first considering the space 
\begin{equation*}
  K_n = \bigsqcup_{G \in \m G_n} \sigma_G 
\end{equation*}
where $\m G_n=\{ [G] \mid G \text{ admissible and }|G|=n \}$ is the set of isomorphism classes\footnote{In the following we omit the notation $[\cdot]$, tacitly picking representatives of each class. Note that some constructions demand for exchanging these representatives, e.g.\ to make sense of $G=G'/F$.} of admissible rank $n$ graphs and $\sigma_G$ an open $(|E|-1)$-dimensional simplex, the interior of $\Delta_G=\{ (x_e)_{e\in E} \mid \sum x_e=1\}$.

A face of $\Delta_{G}$ lies in $K_n$ if and only if the edge set $\{e \in E \mid x_e=0\}$ forms a forest in $G$. On the other hand, some faces of $\Delta_G$ may be missing, namely those corresponding to edge variables $x_e$ vanishing on subgraphs $\gamma \subset G$ with $|\gamma|>0$. Points in these faces are said to \textit{lie at infinity}.

Thus, the open simplices of $K_n$ are glued together using the face relations
\begin{equation*}
 \sigma_G \subset \overline{\sigma_{G'}} \Longleftrightarrow \exists F\subset G' \text{ a forest with } G'/F=G,
\end{equation*}
so that $K_n$ is a \textit{relative simplicial complex}, i.e.\ a pair $K=(X,Y)$ where $X$ is a simplicial complex and $Y\subset X$ a subcomplex, such that $K=X\setminus Y$ (cf.\ \cite{stanley87}).

Each $G$ represents also a cell in $\m {MG}_n$. It is given by taking the quotient with respect to the relation which identifies a point in $\sigma_G$ with a regular metric $\lambda$ on $G$, normalized to unit volume. If all edges of $G$ were distinguishable, this relation would be one-to-one, but if $G$ has nontrivial automorphisms, then this operation ``folds the simplex onto itself'', see for example Figure \ref{fig:simpfold}.

\begin{figure}
\centering
\includegraphics[width=8cm]{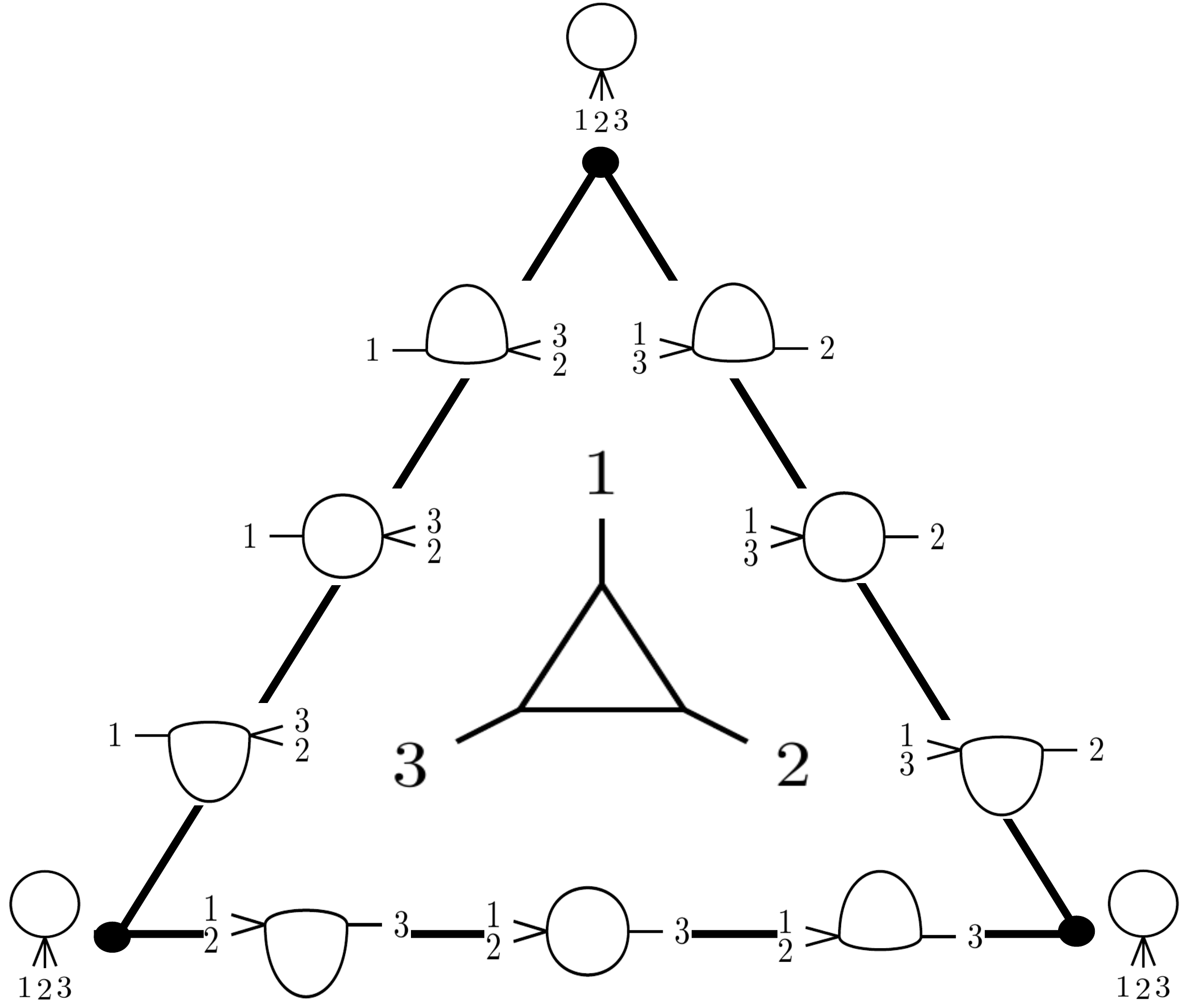}
\caption{$\m {MG}_{1,3}$: Here all three vertices and opposite points in each of the three edges are identified. Hence, $\m {MG}_{1,3}\cong S^2$.}
\label{fig:simpfold}
\end{figure}

This is however the only bad thing that can happen, so $\m {MG}_n$ can be described as a CW-complex with missing cells. Since an admissible graph of rank $n$ can have at most $3n-3$ edges (all vertices trivalent), we find $\dim K_n=\dim \m {MG}_n=3n-4$. 
\newline

Another way to understand the topology of $\m {MG}_n$ is to first replace $K_n$ by a subspace $SK_n$ to which it deformation retracts and then take the quotient as described above. The space $SK_n\subset K_n$ is a simplicial complex and plays the same role for $K_n$ as does the \textit{spine} for Outer space. It can be defined as the geometric realization of the poset $(\m G_n,\preceq)$ where
\begin{equation}
 \m G_n :=\{ G \mid G \text{ adm.}\wedge|G|=n \},\   G \preceq G' \Longleftrightarrow \exists F \subset G' \text{ a forest} : G=G'/F. 
\end{equation}
Hence, a $k$-simplex in $SK_n$ is represented by a chain $G_0 \preceq \ldots \preceq G_k$ of $k+1$ graphs in the poset $\m G_n$.

Since an admissible graph can have at most $3n-3$ edges, we find $\dim SK_n=2n-3$ as the maximal number of edges of its spanning trees.
\newline

The whole construction and the decompositions described above naturally generalize to the case of graphs with $s$ external edges. 
Here we think of these legs as labeled basepoints (at the vertices of $G$) and adjust the definition of admissibility accordingly. One then considers basepointed metric graphs $(G,\{v_1,\ldots, v_s\},\lambda)$ and declares two such tuples to be equivalent if and only if there is a basepoint-preserving isomorphism $\varphi: (G/Z,\{v_1,\ldots, v_s\}) \xrightarrow{\sim} (G'/Z',\{w_1,\ldots, w_s\})$ and $c>0$ such that $\lambda' \circ \varphi = c \lambda$ on $E\setminus Z$ (as above, $Z\subset E$ and $Z' \subset E'$ denote the sets of edges on which $\lambda$ and $\lambda'$ vanish).

\begin{defn}\label{def:mns}
For $n,s \in \mb N$ the \textit{moduli space of rank $n$ graphs with $s$ legs} $\m {MG}_{n,s}$ is defined as 
\begin{equation*}
 \m {MG}_{n,s} := \left \lbrace (G,\{v_1,\ldots, v_s\},\lambda) \; \middle| \;
  \begin{tabular}{@{}l@{}} G admissible with $s$ legs $v_1,\ldots,v_s$, \\$n$ loops and $\lambda:E\rightarrow \mb R_{\geq 0}$ regular
  \end{tabular}
  \right \rbrace_{\big/\sim}.
 \end{equation*}
\end{defn}

For $s>0$ these moduli spaces play the same role for a sequence of groups $\Gamma_{n,s}$ as does $\m {MG}_n$ for $\mathrm{Out}(F_n)$. Here $\Gamma_{n,s}$ is the group of (relative) homotopy classes of self-homotopy equivalences of a rank $n$ graph fixing its $s$ legs, say of the rose graph $R_{n,s}$ with $n$ petals and $s$ thorns,
\begin{equation*}
\Gamma_{n,s}:=\pi_0(\mathrm{aut}(R_{n,s}))  \Longrightarrow \Gamma_{n,0}=\mathrm{Out}(F_n), \Gamma_{n,1}=\mathrm{Aut}(F_n), \ldots
\end{equation*}
For a precise definition and further reading on these groups with applications in geometric group theory we refer to \cite{chkv} and the survey in \cite{v-topgeo}.

Similar to the case of $\m {MG}_n$ one might first look at the space
\begin{equation*}
 K_{n,s}:=\bigsqcup_{G \in \m G_{n,s}} \sigma_G 
\end{equation*}
where now $\m G_{n,s}$ denotes the set of isomorphism classes of admissible rank $n$ graphs with $s$ legs. It decomposes into an union of simplices with missing faces. For $s\geq 2$ it makes sense to consider the case $n=1$ as well for which the intersection of two simplices may be the union of more than one of their faces, cf.\ Figure \ref{fig:simpfold}. Therefore, $K_{n,s}$ forms in general a ``relative $\Delta-$complex'', a $\Delta$-complex with some of its simplices deleted. $\m {MG}_{n,s}$ is then obtained from $K_{n,s}$ by identifying points in its cells $\sigma_{G}$ with metrics $\lambda$ on the corresponding graphs $G$.

From the definition of admissibility and by an Euler characteristic argument we deduce $\dim K_{n,s}=\dim \m {MG}_{n,s} = 3n+s-4$. 

For $n>1$ there is a deformation retract $SK_{n,s} \subset K_{n,s} $, defined analogously to the case without legs, which is a simplicial complex of dimension $2n-3+s$. It may be used to set up a cubical chain complex that allows to compute the homology of $\m {MG}_{n,s}$ as was done in \cite{hv} for the case $s=1$. Moreover, these moduli spaces are rational classifying spaces for the groups $\Gamma_{n,s}$ so that these complexes compute the group homology $H_*( \Gamma_{n,s}  ; \mb Q)$.

\subsection{Moduli spaces of colored graphs}
In the previous section we used edge-metrics to define topological spaces populated by (admissible) graphs. When physicists draw Feynman diagrams to represent particle scattering processes, edges may describe different kinds of particles. To encode this additional piece of data we now consider colored edges.
\begin{defn}
Let $G=(V,E)$ be a graph and $m\in\mathbb{N}$. An \textit{$m$-coloring} of $G$ is a map $c:E \rightarrow \{1,2,...,m\}$.
\end{defn}

An $m$-coloring of a graph represents some (physical) property of the edges which can take $m$ different values. For example one might think of a Feynman diagram in a scalar field theory of three different massive particles as a graph endowed with a 3-coloring, one color for each particle in the theory.\footnote{In many actual quantum field theoretical calculations, the spin of particles adds an additional feature; half-integer spin particles come as oriented edges.}
\newline

Analogous to the constructions in the last section metric graphs endowed with a coloring can be represented by points in a moduli space of colored graphs. Here we will consider three different cases in detail: Spaces in which any $m$-coloring is allowed and spaces of \textit{holocolored} graphs in which only injective colorings with a fixed set of colors are admitted, so that each edge is assigned a different color. In the latter case two types of spaces are distinguished, one using colored graphs as before while the other deals with graphs which retain the information of their coloring upon shrinking edges. 

To make things precise, we start with a definition of equivalence for colored metric graphs. For this we define two relations by

\begin{equation*}
(G,\lambda,c) \sim (G',\lambda',c') \Longleftrightarrow 
\begin{cases}
\text{for } Z=\{\lambda=0\}\subset E,Z'=\{\lambda'=0\}\subset E' \\ \text{exists } 
 \varphi:G/Z \xrightarrow{\sim} G'/Z' \text{ with } \\
  c'= \varphi \circ c_{|E\setminus Z} \text{ and } \lambda' \circ \varphi =  \lambda_{|E\setminus Z}.
\end{cases}
\end{equation*}
and 
\begin{equation*}
(G,\lambda,c) \sim_* (G',\lambda',c') \Longleftrightarrow \begin{cases}
\text{ exists } \varphi:G \xrightarrow{\sim} G' \text{ with } c'= \varphi \circ c \text{ and } \\ 
\text{ for } Z=\{\lambda=0\},Z'=\{\lambda'=0\} \text{ exists } \\
\psi: G/Z \xrightarrow{\sim} G'/Z' \text{ with } \lambda'_{|E'\setminus Z'} \circ \varphi =  \lambda_{|E\setminus Z}.
\end{cases} 
\end{equation*}

Note that the defintion of $\sim$ allows to forget the color of an edge that is collapsed to zero length. Therefore, this is the appropriate generalization of the relation \eqref{eq:equiv} to the colored case and we keep using the same symbol. On the other hand, the relation $\sim_*$ requires matching colors also for edges of zero length. In addition, in the presence of legs we require all maps to preserve basepoints as in Definition \ref{def:mns}.

\begin{defn}
Let $n,s \in \mb N$ and define the \textit{moduli space of $m$-colored graphs} as 
\begin{equation*}
  \m {MCG}^m_{n,s} := \left \lbrace (G,\{v_1,\ldots, v_s\},\lambda,c) \; \middle| \;
  \begin{tabular}{@{}l@{}} G adm.\ with $s$ legs, $|G|=n$, \\ $\lambda$  regular, $c: E \rightarrow \{1, \ldots, m\}$
  \end{tabular}
  \right \rbrace_{\Big/\sim}.
\end{equation*}
\end{defn}

\begin{defn} 
For $n,s  \in \mb N$ set $C:=\{ 1, \ldots, 3(n-1)+s \}$ and define
\begin{enumerate}
\item[-] the \textit{moduli space of holocolored graphs} by
\begin{equation*}
  \m {MHG}_{n,s} := \left \lbrace (G,\{v_1,\ldots, v_s\},\lambda,c) \; \middle| \;
  \begin{tabular}{@{}l@{}} G adm.\ with $s$ legs, $|G|=n$, \\ $\lambda$  regular, $c: E \rightarrow C$ injective
  \end{tabular}
  \right \rbrace_{\Big/\sim}.
\end{equation*}
\item[-] the \textit{moduli space of holocolored graphs with remembered edges} by
\begin{equation*}
  \m {MRG}_{n,s} := \left \lbrace (G,\{v_1,\ldots, v_s\},\lambda,c) \; \middle| \;
  \begin{tabular}{@{}l@{}} G adm.\ with $s$ legs, $|G|=n$, \\ $\lambda$  regular, $c: E \rightarrow C$ injective
  \end{tabular}
  \right \rbrace_{\Big/\sim_*}.
\end{equation*}
\end{enumerate}
\end{defn}

If $n>1$, then arguing as in the uncolored case we see that each moduli space is the quotient of a space that decomposes into a disjoint union of open simplices, one for each isomorphism class of admissible colored graphs with $n$ loops and $s$ legs. In the case of $m$- and holocolorings the face relations are again given by contracting forests, but now with the additional requirement of matching colors. In the case of remembered edges this is a bit more delicate; here it is best to think of a point as given by either
\begin{itemize}
 \item an admissible colored metric graph with additional labels on its vertices that keep track of the contracted edges (cf.\ Section \ref{ss:re}), or
 \item an admissible colored combinatorial graph with all legs distinct and without identifying edges of length zero and contracted edges, $\lambda(e)=0 \not \Rightarrow G=G/e$,
\end{itemize}
and adjust the partial order accordingly.

In any case, for $n>1$ all moduli spaces contain deformation retracts that have the structure of simplicial or cubical complexes (the case $n=1$ is slightly different and will be considered in detail in Section \ref{ss:ccc}).

The only (and rather nontrivial) difference is that the notion of isomorphic graphs now depends on the coloring. In particular, the folding due to automorphisms does not occur if all edges are colored differently. Therefore, both spaces of holocolored graphs, $\m {MHG}_{n,s}$ and $\m {MRG}_{n,s}$, decompose directly into $\Delta$-complexes with missing faces. Their building blocks are the open simplices $\sigma_{(G,c)}$ as described above, one for each isomorphism class of admissible colored graphs $(G,c)$.
On the other hand, the moduli spaces of arbitrarily colored graphs $\m {MCG}_{n,s}^m$ have no such restrictions and contain thus rather ``wildly folded'' simplices. 
\newline

In the following sections we will study the topology of these three spaces in the one-loop case. This case has two advantages: No missing faces occur and the class of admissible graphs is very simple, allowing for direct calculations that would get quickly out of hand for graphs of higher rank. In addition, the analytic properties of Feynman integrals for one-loop diagrams are well understood whereas all other cases still remain quite mysterious. 

Before we continue  let us briefly describe two other moduli spaces that should also be of interest for the study of Feynman diagrams; a detailed analysis is reserved for future work.
\newline

\textbf{Directed graphs:}
A \textit{directed graph} is a graph $G$ together with an orientation of its edges, i.e.\ each edge $e\in E$ has a \textit{source} and \textit{target} vertex, denoted by $v_-(e)$ and $v_+(e)$, respectively. We encode this data by a map $o=(v_-,v_+): E \rightarrow V\times V$.

To define the notion of equivalence for directed metric graphs we henceforth assume that for a point $(G,\lambda,o)$ the metric $\lambda$ is strictly positive - otherwise we replace $G$ by $G/Z$ where $Z \subset E$ is the set of edges of zero length with respect to $\lambda$.

\begin{defn}
Let $\sim'$ denote the generalization of the equivalence relation \eqref{eq:equiv} to the case of (basepointed) directed metric graphs,
\begin{equation*}
(G,\lambda,o) \sim' (G',\lambda',o') \Longleftrightarrow 
\begin{cases}
(G,\lambda) \sim (G',\lambda') \text{ and for all } e\in E: \\
o(e)=(x,y) \Longrightarrow o'(\varphi(e))=(\varphi(x),\varphi(y)).
\end{cases}
\end{equation*}
Then for $n,s \in \mb N$ the \textit{moduli space of directed graphs} is defined as 
\begin{equation*}
  \m {MDG}_{n,s} := \left \lbrace (G,\{v_1,\ldots, v_s\},\lambda,o) \; \middle| \;
  \begin{tabular}{@{}l@{}} G adm.\ with $s$ legs, $|G|=n$, directed \\ by $o: E(G) \rightarrow V(G)^2$ and $\lambda$ regular 
  \end{tabular}
  \right \rbrace_{\Big/\sim'}.
\end{equation*}
\end{defn}

The definition of $\sim'$ is compatible with the equivalence of colored graphs, so that the above construction naturally generalizes to \textit{moduli spaces of colored directed graphs}. Moreover, if necessary, we may also allow only for some edges to be oriented and/or colored, although the notation blows up considerably in this case. 
\newline

\textbf{Restriction on vertex types:}
In a realistic quantum field theory not all types of particle interactions are allowed which translates in the language of Feynman diagrams to restrictions on the possible vertex types. 

For instance, in quantum electrodynamics there is only one interaction vertex, a trivalent node connecting the three edge types present in this theory, an electron, a positron and a photon. 
Hence, a corresponding moduli space, say of 3-colored graphs (ignoring orientations; in fact electrons and positrons call for directed edges as well), should have fewer top-dimensional cells. Namely those only having vertices connecting three different edge types. In lower dimensions other vertices should be admitted though since all graphs obtained by contracting edges in allowed graphs contribute to the analysis of the corresponding Feynman integrals (or to the operator product expansion).
In this exemplary case we should define a \textit{moduli space of QED Feynman diagrams} as $\m {MCG}_{n,s}^3 \setminus \m Z$ where $\m Z$ is the union of all top-dimensional open cells that correspond to graphs having forbidden vertices and all lower dimensional cells corresponding to graphs that cannot be obtained by shrinking edges in allowed graphs.

Again, using colored and/or directed edges is a mere technicality and can in principle be implemented as above.  

\begin{rem}
All but the last type of moduli spaces come with a canonical projection map to $\m{MG}_{n,s}$, defined by forgetting the additional structure put on graphs and inducing a surjection on homology. We discuss this map in the case of $m$- and holocolored graphs below.
\end{rem}

\section{$m$-colored graphs}\label{s:mc}
The primary focus of this and the following sections will be the calculation of the rational homology $H_*(X ; \mb Q)$ for $X=\m {MCG}_{1,s}^m$, $\m {MHG}_{1,s}$ and $\m {MRG}_{1,s}$. 
The main computational tool for this endeavor is a cubical chain complex that was used in \cite{hv} to calculate the rational homology of $\m {MG}_{n,1}$. For $n>1$ the method generalizes directly to the case of $s>1$ and colored graphs. For $n=1$ the same ideas work although coming from a slightly different setup.

\subsection{The cubical complex}\label{ss:ccc}

In the following let $X_s^m := \m {MCG}_{1,s}^m$ denote the moduli spaces of $m$-colored one-loop graphs with $s$ legs. We now describe a decomposition of $X_s^m$ into cubes. 

For any colored graph $(G,c)$ the set of regular metrics, normalized to volume one, describes a closed (no faces at infinity) simplex of dimension $d=|E|-1$. However, as discussed in the previous section, the images of these simplices under the quotient operation with respect to $\sim$ do not assemble themselves into a simplicial or semi-simplicial complex. The structure can be saved though by performing a barycentric subdivision before taking the quotient.

For this we start with the space
\begin{equation*}
K_{1,s}^m:= \bigsqcup_{(G,c) \in \m G_{1,s}^m } \Delta_{(G,c)},
\end{equation*}
$\m G_{1,s}^m$ denoting the set of isomorphism classes of admissible $m$-colored rank one graphs, and consider its barycentric subdivision $BK_{1,s}^m$. 
 \begin{figure}[h]
\centering
\includegraphics[width=8cm]{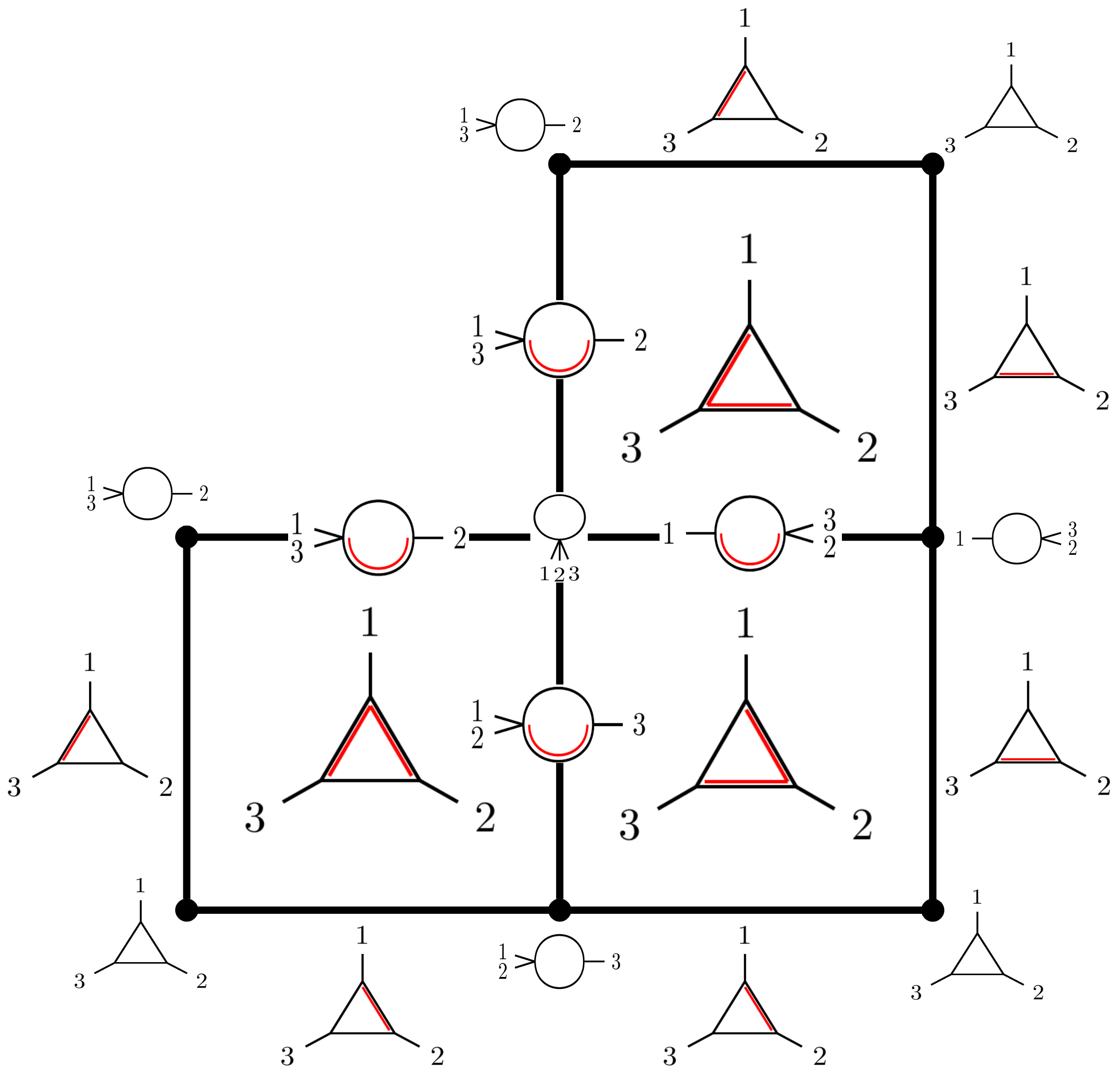}
\caption{A geometric representation of $X_3^1$ as a cubical complex.}
\label{fig:s3m1}
\end{figure}
A $k$-simplex in $BK_{1,s}^m$ is represented by a chain $(G_0,c_0) \preceq \ldots \preceq (G_k,c_k)$ of $k + 1$ colored graphs where the partial order $\preceq$ on $\m G_{1,s}^m $ is given by
\begin{equation*}
 (G,c) \preceq (G',c') \Longleftrightarrow \exists F\subset G' \text{ a forest with } G=G'/F \text{ and } c=c'|_{E'\setminus F}.
\end{equation*}

These simplices can be reassembled into cubes. In this picture a $k$-cube is given by a colored graph $(G,c)$ and a forest $F \subset G$ with $k=|E(F)|$ edges. This cube, denoted by $(G,F,c)$, is the collection of $k!$ simplices in $BK_{1,s}^m$ where each simplex is given by choosing an order $(e_1,\ldots, e_k)$ on the edges of $F$ and setting
\begin{equation*}
 G_0=G/F,\  G_1=G/(e_1, \ldots, e_{k-1}),\ \ldots, \ G_{k-1}=G/e_1, \ G_k=G
\end{equation*}
and
\begin{equation*}
 c_0=c|_{E\setminus E(F)},\  c_1=c|_{E\setminus \{ e_1, \ldots, e_{k-1} \}},\ \ldots, \ c_{k-1}=c|_{E\setminus \{e_1\}}, \ c_k=c.
\end{equation*}

This decomposition of $BK_{1,s}^m$ is fine enough to survive the quotient operation with respect to $\sim$, i.e.\ when identifying points in $\Delta_{(G,c)}$ with metrics $\lambda$ on $G$: No pair of points in a cube belong to the same equivalence class, so no folding occurs and only entire cubes can get identified with each other. 
Therefore, we find a similar decomposition of $X_s^m$ into cubes, one for each pair $(G,F,c)$ where $(G,c) \in \m G_{1,s}^m $ and $F\subset G$ a forest.

\begin{example}
Figure \ref{fig:s3m1} depicts the cubical decomposition of $X_3^1\cong \m{MG}_{1,3}$ (cf.\ Figure \ref{fig:simpfold}). Here the label of a cube $(G,F)$ is drawn as a graph with the edges of $F$ colored in red. Vertices and edges with the same label have to be identified.
\end{example}

\subsection{Rational homology}
The case without colors, or equivalently a single color $m=1$, serves as a good starting point to understand the computation and compare results with already established knowledge. 
A lot is already known about the homology of $\m {MG}_{n,s}$. In fact, the cases $s=0$ and $s=1$ encode the group homology of $\mathrm{Out}(F_n)$ and $\mathrm{Aut}(F_n)$, an active area of research. 
For the case $s>1$ the rational homology of $\m {MG}_{n,s}$ is fully determined for $n=1,2$ in \cite{chkv}. For $n=1$ we have
\begin{equation}\label{eq:homouter}
H_k(\m {MG}_{1,s};\mathbb{Q})=
\begin{cases}
    	\mathbb{Q}^{\binom{s-1}{k}} & \text{if $k\leq s-1$ is even}\\
    	0 & \text{otherwise.}
  	\end{cases}
\end{equation}

Interest in these homology groups stems from the fact that gluing together graphs of low rank along their legs induces so-called \textit{assembly maps} in homology which produce potential new homology classes in $\mathrm{Out}(F_n)$ and $\mathrm{Aut}(F_n)$ for larger $n$.

For the moduli space of $m$-colored graphs a cubical chain complex can be used to compute $H_*(\m {MCG}_{n,s}^m;\mb Q)$, for $n>1$ by generalizing the construction of \cite{hv} to the colored case and for $n=1$ by using the cubical complex described in the previous section.

In both cases we obtain similarly defined cubical chain complexes that compute the rational homology of $\m {MCG}_{n,s}^m$. The chain groups $C_k(\m {MCG}^m_{n,s})$ are the free abelian groups generated by all cubes $(G,F,c)$ where $|E(F)|=k$.\footnote{In fact, for $n>1$ not all cubes contribute. In this case the cubical chains are defined analogously to the cellular chains of a CW-complex; the ``cells'' are the quotients of cubes with respect to $\sim$ and under this operation some cubes become trivial in homology relative to their boundary. This does not happen in the case $n=1$, so we omit a discussion of this technicality.}
The boundary operator $\partial^\square$ from \eqref{eq:cubedelta} can easily be generalized to act on these triples $(G,F,c)$. We define the action of $\partial^m:C_*(\m {MCG}_{n,s}^m)\to C_*(\m {MCG}_{n,s}^m)$ on a cube $(G,F,c) \in C_k(\m {MCG}_{n,s}^m)$ by
\begin{equation}\label{eq:boundarycol}
\partial_k^m(G,F,c):=\sum_{i=1}^{k}(-1)^{i-1}\Big( (G,F\backslash e_i,c)-(G/e_i,F/e_i,c_{e_i}) \Big),
\end{equation}
where $c_{e_i}:=c|_{E \backslash \{e_i\}}$ is the coloring of $G$ with the edge $e_i$ collapsed. By construction this operator squares to zero.
\newline

From now on we stick to the one-loop case, i.e\ the spaces $X_s^m=\m MCG_{1,s}^m$. 
\begin{table}[]
\centering
\begin{tabular}{ c || c | c | c | c | c  }
   & $H_0$ & $H_1$ & $H_2$ & $H_3$ & $H_4$ \\
	\hline  \\[-0.36cm]
	$X_1^2$ & 2 & - & - & - & - \\
$X_2^2$ & 1 & 0 & - & - & - \\
$X_3^2$ & 1 & 0 & 6 & - & - \\
$X_4^2$ & 1 & 0 & 3 & 9 & - \\
$X_5^2$ & 1 & 0 & 6 & 0 & 84 \\
\end{tabular}
\par\bigskip
\begin{tabular}{ c || c | c | c | c  }
   & $H_0$ & $H_1$ & $H_2$ & $H_3$ \\
	\hline \\[-0.36cm]
$X_1^3$ & 3 & - & - & - \\
$X_2^3$ & 1 & 1 & - & - \\
$X_3^3$ & 1 & 0 & 20 & - \\
$X_4^3$ & 1 & 0 & 3 & 103 \\
\end{tabular}
\quad
\begin{tabular}{ c || c | c | c | c  }
   & $H_0$ & $H_1$ & $H_2$ & $H_3$\\
	\hline \\[-0.36cm]
$X_1^4$ & 4 & - & - & - \\
$X_2^4$ & 1 & 3 & - & - \\
$X_3^4$ & 1 & 0 & 49 & - \\
$X_4^4$ & 1 & 0 & 3 & 426 \\
\end{tabular}
\par\bigskip
\begin{tabular}{ c || c | c | c   }
   & $H_0$ & $H_1$ & $H_2$  \\
	\hline \\[-0.36cm]
$X_1^5$ & 5 & - & -  \\
$X_2^5$ & 1 & 6 & -  \\
$X_3^5$ & 1 & 0 & 99  
\end{tabular}
\
\begin{tabular}{ c || c | c | c  }
   & $H_0$ & $H_1$ & $H_2$  \\
	\hline \\[-0.36cm]
$X_1^6$ & 6 & - & - \\
$X_2^6$ & 1 & 10 & - \\
$X_3^6$ & 1 & 0 & 176 
\end{tabular}
\
\begin{tabular}{ c || c | c | c }
   & $H_0$ & $H_1$ & $H_2$\\
	\hline \\[-0.36cm]
$X_1^7$ & 7 & - & - \\
$X_2^7$ & 1 & 15 & - \\
$X_3^7$ & 1 & 0 & 286
\end{tabular}
\vspace{0.5cm}
\caption{The dimension of the homology groups $H_k(X_s^m;\mathbb{Q})$ for $1\leq m\leq7$ and various $s$.}
\label{t:homdimc}
\end{table}
As in the uncolored case (cf.\ \cite{hv}) the homology groups of $X_s^m$ can be calculated by a computer program (for details see \cite{mmm}). 
Endowing graphs with the additional data of a coloring leads to an even greater growth of the number of cubes with increasing $s$. Consequently, computing the homology of $X_s^m$ by explicit calculation gets more difficult when increasing the number of colors. Thus, the maximal number of external legs $s$ to which the calculations can be performed decreases with $m$.
The results for the homology dimensions for different numbers of colors are listed in Table \ref{t:homdimc}, a specific choice of generators for each group can be found in \cite{mmm}.

\subsection{Special cases} In some cases the usual suspects in the algebraic topologist's toolbox allow for direct derivation of results. 

For $s=1$ the moduli spaces $X_1^m$ simply consists of $m$ points, one for each rose graph $R_{1,1}$ colored by $c \in \set m$. Hence, $H_0(X_1^m;\mb Z)\cong \mb Z^m$ and all other homology groups are trivial. 

Note that in every other case $X_s^m$ is path-connected and therefore $H_0(X_s^m;\mb Z)\cong \mb Z$. For $s=2$ this allows to calculate the first homology group by an Euler characteristic argument. 

\begin{prop}
For $X_2^m$ we have 
\begin{equation*}
 H_1(X_2^m;\mb Z) \cong \mb Z^{ \frac{(m-1)(m-2)}{2} }.
\end{equation*}
\end{prop}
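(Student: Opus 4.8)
The plan is to exploit the fact that $X_2^m$ is only one-dimensional. Since $\dim \m{MCG}_{1,s}^m = 3\cdot 1 + s - 4 = s-1$, the space $X_2^m$ is a $1$-complex, and by the remark preceding the statement it is path-connected. The cubical decomposition of Section~\ref{ss:ccc} realises it as a genuine finite graph: for $n=1$ no faces are missing, no folding occurs, and no degenerate cubes appear, so the cubical homology agrees with the singular homology. For a connected $1$-complex $H_1(X_2^m;\mb Z)$ is automatically free abelian of rank $1-\chi(X_2^m)=E-V+1$, where $V$ and $E$ are the numbers of $0$- and $1$-cubes. Thus no torsion can occur and the whole problem reduces to counting cells.

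First I would enumerate the admissible $m$-colored rank one graphs with two labelled legs, which are exactly the $0$-cubes $(G,\emptyset,c)$. Every vertex lying on the unique cycle must carry a leg, since an admissible bivalent vertex needs a basepoint and a leg-free cycle vertex would be bivalent; with only two legs available this forces precisely two shapes. The first is the self-loop rose $R_{1,2}$, a single vertex carrying both legs together with one loop edge; colouring that edge gives $m$ isomorphism classes. The second is the \emph{bigon}: two vertices, one leg on each, joined by two parallel edges. Its only nontrivial automorphism is the transposition of the two edges (the labels on the legs forbid swapping the vertices), so its colourings are unordered pairs of colours, giving $\binom{m+1}{2}=\tfrac{m(m+1)}{2}$ classes. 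Hence $V=m+\tfrac{m(m+1)}{2}$.

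Next I would count the $1$-cubes $(G,F,c)$ with $|E(F)|=1$. A lone loop edge is never a forest, so the self-loop contributes nothing; only the bigon, with one of its two edges chosen as the forest edge $F$ (contracting it reproduces the self-loop), yields $1$-cubes. Before quotienting there are $2m^2$ such configurations — two choices of forest edge times $m^2$ colourings — and the edge-swap involution acts on them freely, leaving $E=m^2$ isomorphism classes. Substituting into $\operatorname{rank} H_1 = E-V+1$ gives
\begin{equation*}
m^2 - m - \tfrac{m(m+1)}{2} + 1 = \tfrac{m^2-3m+2}{2} = \tfrac{(m-1)(m-2)}{2},
\end{equation*}
as claimed; one can cross-check the small cases against the first column of Table~\ref{t:homdimc}.

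The only genuinely delicate step is the isomorphism-class bookkeeping under the edge-swap automorphism of the bigon. For the unmarked colourings (the $0$-cubes) this involution has $m$ fixed points and so produces the multiset count $\tfrac{m(m+1)}{2}$, whereas once an edge is marked as the forest edge (the $1$-cubes) it acts freely and produces $m^2$. Keeping these two counts straight — and confirming that the marking genuinely destroys all fixed points — is the crux of the argument; everything else is the standard Euler-characteristic computation for a connected graph.
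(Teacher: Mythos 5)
Your argument is correct, but it is not the route the paper takes: the authors note immediately before the statement that ``for $s=2$ this allows to calculate the first homology group by an Euler characteristic argument'' and then deliberately decline to give that combinatorial proof, opting instead for a Mayer--Vietoris sequence. Their proof covers $X_2^m$ by a contractible piece $A$ (all cubes whose graph has an edge of color $m$) and a piece $B$ that deformation retracts onto $X_2^{m-1}$, with $A\cap B$ a disjoint union of $m-1$ arcs; the resulting short exact sequence splits and yields the recursion $\ti H_1(X_2^m;\mb Z)\cong \ti H_1(X_2^{m-1};\mb Z)\oplus\mb Z^{m-2}$, from which the formula follows by induction on $m$. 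You instead count cells in the cubical decomposition of Section \ref{ss:ccc} and use that a connected $1$-complex has free $H_1$ of rank $E-V+1$. Your bookkeeping is right: only the rose and the bigon are admissible, the $0$-cubes number $m+\tfrac{m(m+1)}{2}$ (the edge-swap of the bigon acts on colorings with $m$ fixed points, giving the multiset count), the $1$-cubes number $m^2$ (marking a forest edge kills all fixed points, so the involution acts freely on the $2m^2$ configurations), and $1-\chi=\tfrac{(m-1)(m-2)}{2}$. These counts are consistent with the CW-structure used later in the proof of Proposition \ref{prop:pi1s2}, where $X_2^m$ is shown to deformation retract onto the complete graph $K_m$ --- that proposition in fact subsumes your computation by identifying the homotopy type outright. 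What the paper's induction buys is the color filtration $X_2^{m-1}\subset X_2^m$, which foreshadows the stability discussion around Conjecture \ref{conj}; what your count buys is a shorter, fully self-contained derivation in which freeness of $H_1$ (hence the integral statement) is immediate from one-dimensionality rather than from the splitting of an exact sequence.
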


\begin{proof}
 Instead of giving a combinatorial proof, we use a Mayer-Vietoris sequence to provide some insight into the topology at work.
 
Divide the space $X_2^m$ into $X_2^m=\mathring{A}\cup\mathring{B}$ with
\begin{align*}
A&:=\{(G,F,c)\;|\; m\in \mathrm{Im}(c)\} , \\
B&:=\{(G,F,c)\;|\; c(e)\neq m\text{ for at least one }e\in E\}.
\end{align*}
The subspaces $A$ and $B$ are depicted in Figure \ref{fig:MV_A} and Figure \ref{fig:MV_B}, respectively. In these figures, black labels represent the labels of external legs, while red labels represent the coloring.
\begin{figure}[h]
 \centering
   \includegraphics[width=11cm]{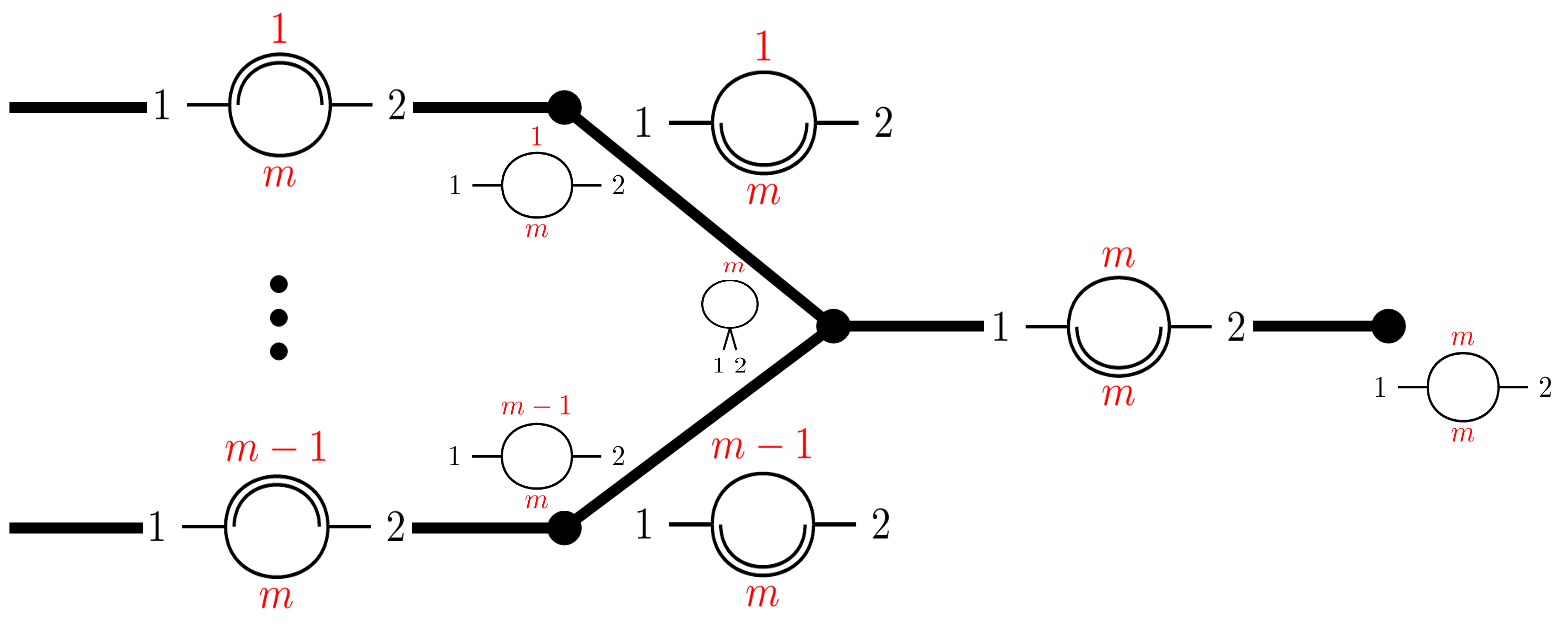}
   \caption{The subspace $A\subset X_2^m$ containing all cubes corresponding to graphs with at least one edge of color $m$.}
  \label{fig:MV_A}
\end{figure}
\begin{figure}[h]
 \centering
\includegraphics[width=11cm]{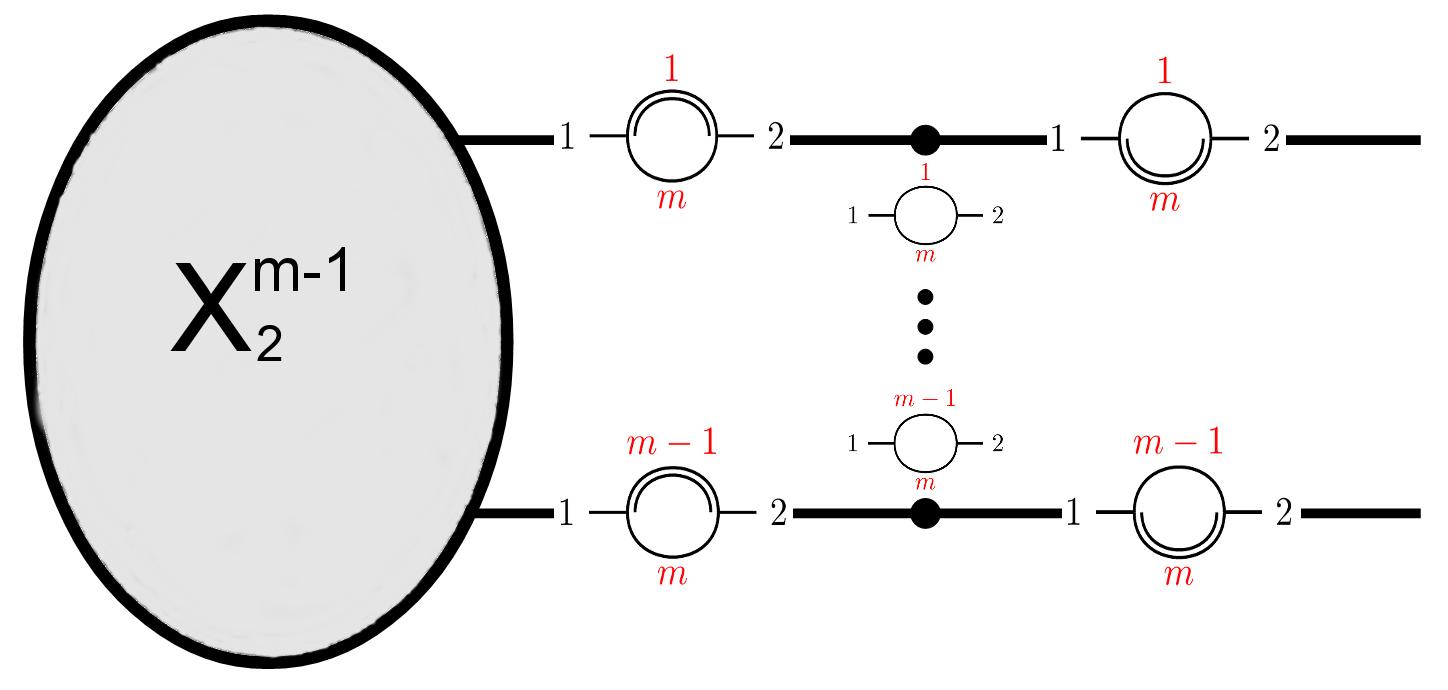}
\caption{The subspace $B\subset X_2^m$ containing all cubes corresponding to graphs with at least one edge not colored by $m$.}
\label{fig:MV_B}
\end{figure}

The intersection $A\cap B$ consists of all cubes corresponding to graphs that contain an edge colored with $m$ but with the other edge colored differently. Figure \ref{fig:MV_AintB} illustrates this intersection.
\begin{figure}[]
 \centering
\includegraphics[width=10cm]{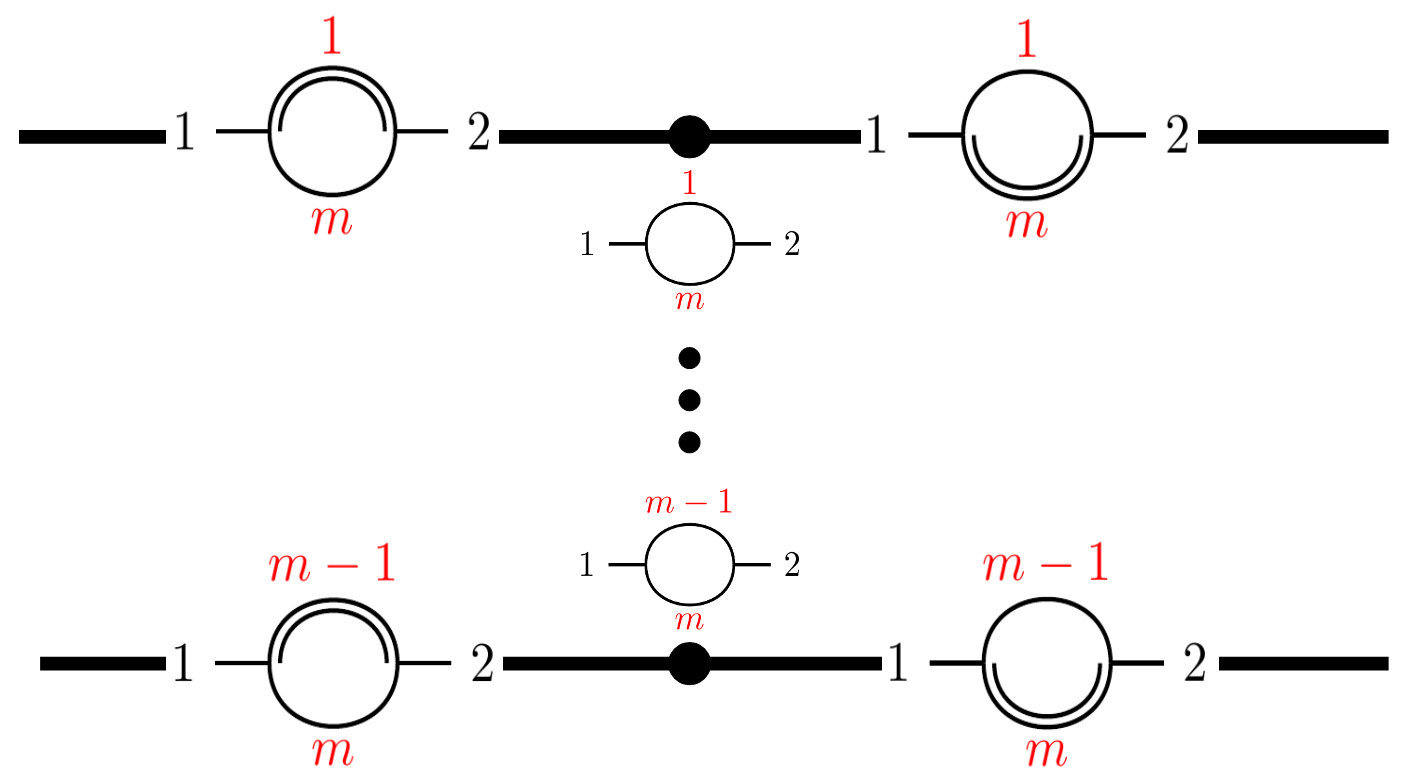}
\caption{The intersection $A\cap B$ of the involved subspaces is a disjoint union of line segments.}
\label{fig:MV_AintB}
\end{figure}
$A$ is contractible and $B$ deformation retracts to a subspace homeomorphic to $X_2^{m-1}$ by shrinking all cubes containing an edge colored by $m$ to zero length. Their intersection $A\cap B$ consists of $m-1$ disjoint lines and can be deformation retracted to $m-1$ disjoint points.

The reduced Mayer-Vietoris sequence for $X_2^m$ and its subspaces $A$ and $B$ reads
\begin{align*}
0 & \longrightarrow \ti {H}_1(A\cap B;\mb Z) \longrightarrow \ti{H}_1(A;\mb Z)\oplus\ti{H}_1(B;\mb Z) \longrightarrow \ti{H}_1(X_2^m;\mb Z) \\
 & \longrightarrow \ti{H}_0(A\cap B;\mb Z) \longrightarrow \ti{H}_0(A;\mb Z)\oplus\ti{H}_0(B;\mb Z)\longrightarrow 0
\end{align*}

Now $\ti{H}_n(A;\mb Z)=0$ since $A$ is homotopy equivalent to a point. By the additivity axiom
\begin{equation*}
\ti{H}_1(A\cap B;\mb Z)\cong\bigoplus_{i=1}^{m-1}\ti H_1(\{*\};\mb Z)=0
\end{equation*}
and
\begin{equation*}
H_0(A\cap B;\mb Z)\cong H_0(\bigsqcup_{i=1}^{m-1}\{*\};\mb Z)\cong\mathbb{Z}^{m-1}.
\end{equation*} 
Hence, $\ti{H}_0(A\cap B;\mb Z)\cong\mathbb{Z}^{m-2}$ and furthermore $\ti{H}_1(B;\mb Z)\cong \ti{H}_1(X_2^{m-1};\mb Z)$. Therefore, the above long exact sequence contains a short exact sequence which after application of the above isomorphisms reads
\begin{equation*}
0 \longrightarrow \ti H_1(X_2^{m-1};\mb Z)\longrightarrow \ti H_1(X_2^m;\mb Z)\longrightarrow \mathbb{Z}^{m-2}\longrightarrow 0,
\end{equation*}
where the first zero is $\ti H_1(A\cap B;\mb Z)$. Since $\mb Z^{m-2}$ is free, this sequence splits and we immediately obtain
\begin{equation*}
\ti H_1(X_2^m;\mb Z)\cong\ti H_1(X_2^{m-1};\mb Z)\oplus\mathbb{Z}^{m-2}.
\end{equation*}
With this the proposition follows by induction.
\end{proof}

We finish this section with a general remark on the calculation of the rational homology and the role of the colors in it. 

Note that $\m {MCG}_{n,s}^m$ naturally contains $m$ copies of $\m {MG}_{n,s}$ as the subspaces corresponding to all graphs whose edges are colored identically. We use this fact by considering the subspace
\begin{equation*}
A := \{(G,\lambda,c)\in \m {MCG}_{n,s}^m \; | \; c \text{ is constant}\}
\end{equation*}
and the long exact sequence of the pair $(\m {MCG}_{n,s}^m,A)$. To simplify notation we set $X:=\m {MCG}_{n,s}^m$. The exact sequence then reads
\begin{equation*}
\ldots \rightarrow H_{k+1}(X,A;\mathbb{Q})\rightarrow H_k(A;\mathbb{Q}) \rightarrow H_k(X;\mathbb{Q}) \rightarrow H_k(X,A;\mathbb{Q})\rightarrow \ldots
\end{equation*}

We have $A\cong\sqcup_{i=1}^m \m {MG}_{n,s}$ and therefore $H_k(A;\mathbb{Q})\cong H_k(\m {MG}_{n,s};\mathbb{Q})^m$ for all $k\in \mathbb{N}_0$. This establishes a close connection between the colored and uncolored case via the relative homology groups $H_k(X,A;\mb Q)$.
\newline

The above ansatz and variations of it - utilizing various filtrations induced by the coloring - may also be used to improve computer calculations. In addition, there are many interesting maps between these moduli space for different $n$, $s$ and $m$ given by attaching self-loops, adding, removing or gluing legs and recoloring edges. The most interesting case surely is the variation of $n$ which will be pursued in future work (cf.\ \cite{chkv} for applications of these ideas in the uncolored case).

\subsection{Homological stability}
In the case of more than two legs $s>2$ the first homology groups of $X_s^m$ stabilize in the sense that $H_1$ is trivial for all numbers of legs and colors. 

\begin{prop}\label{prop:firsthom}
 For all $s \geq 3$ and $m \geq 1$ 
 \begin{equation*}
  H_1(X_s^m;\mb Z)=0.
 \end{equation*}
\end{prop}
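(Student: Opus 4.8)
The plan is to run the same Mayer--Vietoris induction on the number of colors $m$ as in the proof for $X_2^m$, the point being that for $s\geq 3$ two features change in our favor: the piece playing the role of $B$ now also has vanishing first homology by the inductive hypothesis, and the intersection $A\cap B$ becomes connected. Concretely, set $X:=X_s^m$ and write $X=\mathring A\cup\mathring B$ with
\begin{align*}
A&:=\{(G,F,c)\mid m\in\mathrm{Im}(c)\},\\
B&:=\{(G,F,c)\mid c(e)\neq m\text{ for at least one }e\in E\},
\end{align*}
exactly as before. Since a rank one graph is a single cycle, whenever a coloring omits the value $m$ (resp.\ attains it on a proper subset of edges) the $m$-colored (resp.\ non-$m$-colored) edges form a forest. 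Contracting these forests gives deformation retracts $B\simeq X_s^{m-1}$ and $A\simeq X_s^1=\m{MG}_{1,s}$, the latter onto the monochromatic subspace of all-$m$ graphs. In particular $A$ and $B$ are path-connected.

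For the base case $m=1$ we have $X_s^1=\m{MG}_{1,s}$, whose first homology vanishes for $s\geq 3$; rationally this is \eqref{eq:homouter}, and integrally it follows from the simple connectivity of $\m{MG}_{1,s}$ (e.g.\ $\m{MG}_{1,3}\cong S^2$), which can be verified directly by the same type of argument. Assuming $H_1(X_s^{m-1};\mb Z)=0$, the retracts above give $H_1(A;\mb Z)=H_1(X_s^1;\mb Z)=0$ and $H_1(B;\mb Z)=H_1(X_s^{m-1};\mb Z)=0$. Feeding this into the reduced Mayer--Vietoris sequence
\begin{equation*}
H_1(A;\mb Z)\oplus H_1(B;\mb Z)\to H_1(X;\mb Z)\to\ti H_0(A\cap B;\mb Z)\to\ti H_0(A;\mb Z)\oplus\ti H_0(B;\mb Z)
\end{equation*}
and using that $A,B$ are connected (so the last term vanishes) yields an isomorphism $H_1(X_s^m;\mb Z)\cong\ti H_0(A\cap B;\mb Z)$. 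Hence everything reduces to proving that $A\cap B$ is connected.

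The space $A\cap B$ consists of all colored cycles with $s$ legs carrying at least one edge of color $m$ and at least one edge of a different color; this is exactly where the hypothesis $s\geq 3$ enters, since for $s=2$ this intersection is a disjoint union of $m-1$ arcs and $H_1$ is nonzero. The main obstacle is therefore this connectivity statement, which I would prove combinatorially on the $1$-skeleton of $A\cap B$: the allowed elementary moves are contracting an edge and the inverse expansion of a vertex, subject to the constraint that both a color-$m$ and a non-$m$ edge survive. First I would expand every vertex carrying two or more legs, coloring each newly created edge so as to keep both color classes nonempty; since $s\geq3$ there are always at least three edges available, so this can be arranged and brings any graph to the maximal $s$-cycle. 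It then remains to connect any two $m$-colorings of the $s$-cycle (with both color classes nonempty) to a fixed standard one; a single edge is recolored by contracting it and re-expanding the resulting vertex with the desired color, a move which stays inside $A\cap B$ precisely because with $s\geq3$ edges one can always avoid destroying the last surviving edge of either color class. Carrying out this reduction carefully---in particular checking that the recoloring moves never force us out of $A\cap B$---is the technical heart of the argument; the induction then closes and gives $H_1(X_s^m;\mb Z)=0$.
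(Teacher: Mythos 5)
Your Mayer--Vietoris induction on $m$ is a genuinely different route from the paper, which obtains Proposition \ref{prop:firsthom} as a one-line corollary of Theorem \ref{homotopy} (simple connectivity of $X_s^m$, proved by collapsing a maximal tree in a CW-model and killing all generators of $\pi_1$ via the attaching maps of the two-cells). The skeleton of your argument is sound: the decomposition $X_s^m=\mathring A\cup\mathring B$, the retractions $B\simeq X_s^{m-1}$ and $A\simeq\m{MG}_{1,s}$ obtained by shrinking the forest of $m$-colored (resp.\ non-$m$-colored) edges, and the exactness argument forcing $H_1(X_s^m;\mb Z)\cong\ti H_0(A\cap B;\mb Z)$ are all correct, and your diagnosis that the case $s=2$ fails precisely because $A\cap B$ disconnects is exactly right. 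If completed, this would even localize the $m$-dependence of $H_1$ entirely in $\pi_0(A\cap B)$, which is a nice structural payoff the paper's proof does not give.

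However, as written there are two genuine gaps. First, the base case: \eqref{eq:homouter} only gives $H_1(\m{MG}_{1,s};\mb Q)=0$, which leaves room for torsion in $H_1(\m{MG}_{1,s};\mb Z)$; your fallback, simple connectivity of $\m{MG}_{1,s}$ ``verified by the same type of argument,'' is precisely the $m=1$ instance of Theorem \ref{homotopy}, so the induction does not actually avoid the paper's theorem --- it still consumes its hardest single ingredient as input (and the same input is needed again for $H_1(A;\mb Z)=0$ at every stage of the induction, since $A\simeq\m{MG}_{1,s}$). Second, the connectivity of $A\cap B$ is only sketched, and you correctly flag it as the technical heart; note that by your own exact sequence $H_1(X_s^m;\mb Z)\cong\ti H_0(A\cap B;\mb Z)$, so this step carries the entire content of the proposition and cannot be waved through. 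The claim is true and your contract/re-expand moves can be made to work (e.g.\ contract edges, never the last one of either color class, down to a two-edge banana $B_{i,m}(U,V)$, then connect any two such bananas through triangles colored $(i,i,m)$ or $(i,j,m)$, which exist for $s\geq3$ because $s$ legs admit a partition into three nonempty blocks), but until that reduction is carried out the proof is incomplete.
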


\begin{proof}
The statement is a corollary of Theorem \ref{homotopy} which states that all of these spaces are simply connected. 
\end{proof}

The results in Table \ref{t:homdimc} suggest that this stability with respect to $m$ actually holds for all groups $H_i(X_s^m)$ with $i<s-1$.
\begin{conj}\label{conj}
For all $m>1$ and $0\leq i < s-1$ there is an isomorphism 
\begin{equation*}
H_i(X_s^m; \mb Q) \cong H_i(X_s^1; \mb Q) = H_i(\m {MG}_{1,s};\mb Q).
\end{equation*}
\end{conj}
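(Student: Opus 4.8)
The plan is to prove the isomorphism by induction on the number of colors $m$, exploiting the fact that the subcomplex of $X_s^m$ consisting of all cubes $(G,F,c)$ whose coloring avoids the color $m$ is precisely a copy of $X_s^{m-1}$ sitting inside $X_s^m$: the face relations contract forests and never introduce new colors, so the inclusion $X_s^{m-1}\hookrightarrow X_s^m$ is cellular. The base case $m=1$ is exactly \eqref{eq:homouter}. For the inductive step I would feed this inclusion into the long exact sequence of the pair,
\[
\cdots \to H_i(X_s^{m-1};\mb Q) \to H_i(X_s^m;\mb Q) \to H_i(X_s^m,X_s^{m-1};\mb Q) \to H_{i-1}(X_s^{m-1};\mb Q) \to \cdots,
\]
so that everything reduces to a single claim: the relative rational homology $H_i(X_s^m,X_s^{m-1};\mb Q)$ vanishes for $i<s-1$ and is free in the top degree $i=s-1$. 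Granting this, the sequence immediately yields $H_i(X_s^{m-1};\mb Q)\cong H_i(X_s^m;\mb Q)$ for $i\le s-3$; the remaining degree $i=s-2$ requires in addition that the connecting map $H_{s-1}(X_s^m,X_s^{m-1};\mb Q)\to H_{s-2}(X_s^{m-1};\mb Q)$ vanish, to which I return below.

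The relative complex $C_*(X_s^m,X_s^{m-1})$ has an explicit description that makes it amenable to a discrete-Morse attack. Its generators are the cubes $(G,F,c)$ in which the color $m$ actually occurs, and under the boundary \eqref{eq:boundarycol} the faces $(G,F\backslash e_i,c)$ always retain the color $m$, while a contraction face $(G/e_i,F/e_i,c_{e_i})$ survives in the relative complex only when $e_i$ is not the unique $m$-colored edge of $G$. The strategy is to build an acyclic matching on these generators by singling out a canonical $m$-colored edge $e^\ast$ of each graph (for instance the $m$-colored cycle edge that is smallest in a fixed ordering of the legs adjacent to it) and pairing $(G,F,c)$ with $(G,F\cup e^\ast,c)$ whenever $F\cup e^\ast$ is again a forest. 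This is the standard ``add or remove the distinguished edge from the spanning forest'' matching used for complexes of this Outer-space type; by a dimension count its only unmatched (critical) cubes should be those $F$ that already fill up the graph around $e^\ast$, which are top-dimensional. If the matching is acyclic and all critical cells lie in degree $s-1$, the Morse-theoretic collapse shows the relative complex is chain homotopy equivalent to one concentrated in degree $s-1$, proving both the vanishing and the freeness.

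The main obstacle is precisely the construction and verification of this matching when a graph carries \emph{several} edges of color $m$: the distinguished-edge rule must be chosen so that it is preserved by the surviving faces (in particular it must interact correctly with the contraction faces that delete an $m$-colored edge other than $e^\ast$), and one must rule out closed gradient paths to guarantee acyclicity. I expect this bookkeeping, rather than any deep topological input, to be the crux. For the leftover degree $i=s-2$, I would argue that every top-dimensional critical cube represents a relative cycle whose boundary already lies in $X_s^{m-1}$ as a genuine cycle supported on a top cell of $\m{MG}_{1,s}$, forcing the connecting homomorphism to vanish.

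As an independent consistency check, the prediction that the relative homology is concentrated in degree $s-1$ is compatible with the Euler-characteristic computation and with the top Betti number of $X_s^m$ being a degree-$s$ polynomial in $m$, which would make the relative rank in degree $s-1$ a polynomial of degree $s-1$. Finally, the cases $i=0$ and $i=1$ of the conjecture are already covered by path-connectedness and by Theorem \ref{homotopy} (simple connectedness), so the genuinely new content of this scheme is the range $2\le i<s-1$.
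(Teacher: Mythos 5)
First, be aware that the statement you are addressing appears in the paper only as Conjecture \ref{conj} and is \emph{not} proven there: the authors establish just ``one half'' of it, namely the inequality $\dim H_k(X_s^1;\mb Q)\le \dim H_k(X_s^m;\mb Q)$, via the forgetful chain map $f^{(m)}$ and the averaging chain map $c^{(m)}$ of Proposition \ref{prop:surj}. So any complete argument would be new mathematics, and your proposal must stand on its own. Its skeleton is reasonable: $X_s^{m-1}$ does sit inside $X_s^m$ as the subcomplex of cubes whose coloring avoids $m$, the long exact sequence of the pair reduces everything to the vanishing of $H_i(X_s^m,X_s^{m-1};\mb Q)$ for $i<s-1$ together with the vanishing of one connecting map, and the cases $i=0,1$ are indeed already covered by connectedness and Theorem \ref{homotopy}.

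The gap is in the Morse-theoretic step, and it is not mere bookkeeping. An admissible rank-one graph is bridge-free, so every internal edge lies on its unique cycle; consequently $F\cup\{e^\ast\}$ fails to be a forest exactly when $F=E(G)\setminus\{e^\ast\}$. Your matching therefore leaves exactly one critical cube $(G,E(G)\setminus\{e^\ast\},c)$ for \emph{every} isomorphism class of colored graph $(G,c)$ in which the color $m$ occurs, and this cube has dimension $|E(G)|-1$, which ranges over all of $0,1,\ldots,s-1$ as $|E(G)|$ ranges over $1,\ldots,s$. These critical cells are the top cube of each individual graph's family of cubes, not top-dimensional cells of the complex, so they are \emph{not} concentrated in degree $s-1$ and the collapse does not yield the vanishing you need; at best it replaces the relative complex by a smaller Morse complex whose homology still has to be computed. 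In addition, the two issues you flag yourself --- invariance of the distinguished-edge rule under contraction faces (which genuinely can reshuffle the legs adjacent to an edge) and acyclicity of the gradient paths --- remain unresolved, and the argument offered for the vanishing of the connecting map $H_{s-1}(X_s^m,X_s^{m-1};\mb Q)\to H_{s-2}(X_s^{m-1};\mb Q)$ (that the boundary of a critical cube is ``a genuine cycle'' in the subcomplex) does not show that this cycle is a \emph{boundary} there, which is what the vanishing of $\delta$ requires. As it stands the proposal is a plausible program rather than a proof, consistent with the statement's status as an open conjecture in the paper.
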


For general rank $n$, consider the cubical chain complexes used to compute the homology groups of $\m {MCG}_{n,s}^m$. For any $m\in\mathbb{N}$ there is a canonical map that forgets the coloring of each graph,
\begin{align*}
 f^{(m)}:C_*(\m {MCG}_{n,s}^m) &\longrightarrow C_*(\m {MG}_{n,s}), \\
 ( G,F,c) &\longmapsto (G,F).
\end{align*}

In the other direction there is a also a map defined by equipping a graph with all possible colorings
\begin{align*}
c^{(m)} : C_*(\m {MG}_{n,s}) & \longrightarrow C_*(\m {MCG}_{n,s}^m), \\
(G,F) & \longmapsto \sum_\text{all colorings c}\frac{1}{m^{|E(G)|}} (G,F,c).
\end{align*}

A short calculation shows that both maps are chain maps. Let $f_\ast^{(m)}$ and $c_\ast^{(m)}$ denote the corresponding induced homomorphism on homology. Then we find
\begin{prop}\label{prop:surj}
$f_\ast^{(m)}\circ c_\ast^{(m)} = id_{H_*(\m {MG}_{n,s};\mathbb{Q})}$, that is $f_\ast^{(m)}$ is surjective and $c_\ast^{(m)}$ is injective.
\end{prop}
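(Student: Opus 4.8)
The plan is to verify directly that $f_\ast^{(m)}\circ c_\ast^{(m)}$ equals the identity by computing the composition $f^{(m)}\circ c^{(m)}$ already at the chain level, which suffices since these are the maps inducing the homology homomorphisms. First I would apply $c^{(m)}$ to a generator $(G,F)$ of $C_*(\m{MG}_{n,s})$, obtaining $\frac{1}{m^{|E(G)|}}\sum_c (G,F,c)$ where the sum runs over all $m^{|E(G)|}$ colorings of the edges of $G$. Then I would apply $f^{(m)}$, which sends each $(G,F,c)$ back to $(G,F)$ regardless of $c$. The upshot is that
\begin{equation*}
(f^{(m)}\circ c^{(m)})(G,F)=\frac{1}{m^{|E(G)|}}\sum_c (G,F)=\frac{m^{|E(G)|}}{m^{|E(G)|}}(G,F)=(G,F),
\end{equation*}
so the composition is the identity on chains, hence on homology. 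This is the heart of the argument and is essentially a counting identity: there are exactly $m^{|E(G)|}$ colorings, and the normalization factor was chosen precisely to cancel this count.

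Once the chain-level identity $f^{(m)}\circ c^{(m)}=\mathrm{id}$ is established, passing to homology gives $f_\ast^{(m)}\circ c_\ast^{(m)}=\mathrm{id}_{H_*(\m{MG}_{n,s};\mathbb{Q})}$ by functoriality of the induced maps. The two stated consequences then follow from elementary algebra: if a composition $g\circ h$ is the identity, then $g$ is surjective and $h$ is injective. Here $g=f_\ast^{(m)}$ is surjective because every class $\alpha\in H_*(\m{MG}_{n,s};\mathbb{Q})$ equals $f_\ast^{(m)}(c_\ast^{(m)}(\alpha))$, and $h=c_\ast^{(m)}$ is injective because $c_\ast^{(m)}(\alpha)=0$ forces $\alpha=f_\ast^{(m)}(c_\ast^{(m)}(\alpha))=f_\ast^{(m)}(0)=0$.

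One subtlety worth flagging is that $c^{(m)}$ lands in rational chains because of the factor $\frac{1}{m^{|E(G)|}}$; this is exactly why the statement is phrased for $\mathbb{Q}$-coefficients, and it is harmless here. The only genuine hypothesis I am using beyond the definitions is that both $f^{(m)}$ and $c^{(m)}$ are chain maps (compatible with the boundary operator \eqref{eq:boundarycol}), which the excerpt asserts via the remark that ``a short calculation shows that both maps are chain maps.'' I would take that verification as given, since without it the induced maps on homology would not be defined.

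The main obstacle, such as it is, lies not in the composition identity—which is a one-line count—but in confirming that $c^{(m)}$ respects the boundary operator, where one must check that summing over all colorings commutes with the two terms $(G,F\backslash e_i,c)$ and $(G/e_i,F/e_i,c_{e_i})$ of \eqref{eq:boundarycol}. For the deletion term this is immediate since the coloring is unchanged, while for the contraction term one must observe that as $c$ ranges over all colorings of $G$, the restricted coloring $c_{e_i}$ ranges over all colorings of $G/e_i$ with each such coloring hit exactly $m$ times (one for each color of the now-collapsed edge $e_i$), so that the factor $\frac{1}{m^{|E(G)|}}$ correctly renormalizes to $\frac{1}{m^{|E(G/e_i)|}}=\frac{1}{m^{|E(G)|-1}}$. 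This bookkeeping is the crux of the chain-map property, but once granted, the proposition is a formal consequence.
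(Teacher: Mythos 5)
Your proof is correct and matches the paper's argument: both compute $f^{(m)}\circ c^{(m)}$ directly on a generator $(G,F)$ at the chain level, observing that the $m^{|E(G)|}$ colorings cancel the normalization factor, and then pass to homology. Your additional sketch of why $c^{(m)}$ is a chain map (the $m$-to-one correspondence of colorings under edge contraction) is a welcome expansion of the paper's unproved assertion that "a short calculation shows that both maps are chain maps," but the core argument is the same.
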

\begin{proof}
Let $k\in\mathbb{N}_0$. For all $[x]\in H_k(\m {MG}_{n,s};\mathbb{Q})$ represented by $\sum_iq_i (G_i,F_i)$ with $q_i\in\mathbb{Q}$ we have
\begin{equation*}
(f_\ast^{(m)}\circ c_\ast^{(m)})[x]=\left[ \sum_i q_i(f^{(m)}\circ c^{(m)})( G_i,F_i) \right]
\end{equation*}
 and
\begin{align*}
(f^{(m)}\circ c^{(m)})(G_i,F_i)& =  f^{(m)} \big(\sum_\text{all colorings c}\frac{1}{m^{|E_(G)|}} (G_i,F_i,c) \big)\\
&=\sum_\text{all colorings c}\frac{1}{m^{|E(G)|}} f^{(m)}( G_i,F_i,c ) \\
&=\sum_\text{all colorings c}\frac{1}{m^{|E(G)|}} (G_i,F_i) \\
&=( G_i,F_i ).
\end{align*}
\end{proof}

 This means in particular that $\dim H_k(X_s^1;\mathbb{Q})\leq\dim H_k(X_s^m;\mathbb{Q})$ holds for all $k$, $s$ and $m$, proving ``one half'' of the conjecture.

\subsection{Homotopy type} Now we take a look at the degree of connectivity of the spaces $X_s^m$.
\newline

First consider the case $s=1$. The moduli spaces $X_1^m$ simply consist of $m$ points, one for each rose graph $R_{1,1}$ colored by $c\in \set m$. Hence, $\pi_0(X_1^m)=\{1,\ldots,m \}$. 

All other spaces $X_s^m$ are path-connected, hence also connected. 

\begin{prop}\label{prop:pi1s2}
For $s=2$ and $m\in \mb N$ we have $\pi_1(X_2^m) \cong F_{\frac{1}{2}(m-1)(m-2)}$.
\end{prop}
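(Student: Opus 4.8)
The plan is to show that $X_2^m$ is homotopy equivalent to a graph (one-dimensional CW-complex), so that its fundamental group is automatically free, and then to count the rank of this free group. The key observation is that $X_2^m = \m{MCG}_{1,2}^m$ is built from rank one graphs with two legs. An admissible rank one graph with two legs is extremely simple: it is essentially a single loop (the rose $R_{1,2}$) with the two basepoints distributed on it, or a graph obtained by contracting the loop edge. Because the rank is one and there are only two legs, the maximal number of edges is small, so $X_2^m$ is a low-dimensional complex; in fact $\dim X_2^m = 3\cdot 1 + 2 - 4 = 1$, so $X_2^m$ is already a one-dimensional cell complex (a graph). This is the decisive simplification: for a connected graph, $\pi_1$ is free of rank equal to its first Betti number $1 - \chi$.

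First I would make the dimension count rigorous using the cubical decomposition from Section \ref{ss:ccc}: each cube $(G,F,c)$ has dimension $|E(F)| \le 1$ here, since the spanning forests of a rank one two-legged graph have at most one edge. Hence $X_2^m$ consists only of $0$-cubes and $1$-cubes, confirming it is a graph. Since $X_2^m$ is path-connected (as noted in the excerpt for $s\ge 2$), I would then invoke the standard fact that the fundamental group of a connected graph is free of rank $b_1 = 1 - \chi(X_2^m)$, where $\chi$ is the Euler characteristic. Alternatively, and more cleanly, I would simply compute $\pi_1$ from the already-established homology: for a connected graph $\pi_1$ is free, hence $\pi_1^{\mathrm{ab}} = H_1$, and the preceding Proposition gives $H_1(X_2^m;\mb Z) \cong \mb Z^{(m-1)(m-2)/2}$. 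A free group $F_k$ has abelianization $\mb Z^k$, so $k = \frac{1}{2}(m-1)(m-2)$, yielding the claim.

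The logical backbone is therefore: (i) establish that $X_2^m$ is a connected $1$-dimensional CW-complex, so $\pi_1(X_2^m)$ is free; (ii) a free group is determined up to isomorphism by the rank of its abelianization; (iii) apply the previous Proposition computing $H_1$. Step (i) is the only place requiring genuine argument specific to this space, and it follows directly from the cubical complex description once one notes that all cubes have dimension $\le 1$.

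The main obstacle I anticipate is step (i): one must verify carefully that $X_2^m$ really is genuinely one-dimensional and has no higher cells hiding in the construction, and that it is connected rather than merely a disjoint union of circles and arcs. This reduces to enumerating the admissible $m$-colored graphs $(G,c)$ of rank one with two legs and their forests $F$, checking that the top-dimensional cubes are exactly the $1$-cubes and that they glue into a single connected component. Given the explicit pictures (Figures \ref{fig:MV_A}–\ref{fig:MV_AintB}) this enumeration is routine but is the step that actually pins down the homotopy type; everything else is a formal consequence of the freeness of $\pi_1$ for graphs together with the known computation of $H_1$.
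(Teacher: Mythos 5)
Your argument is correct, but it takes a different route from the paper. You establish that $X_2^m$ is a connected one-dimensional CW-complex (via the dimension count $3n+s-4=1$ and the observation that every cube $(G,F,c)$ has $|E(F)|\le 1$), conclude that $\pi_1$ is free, and then read off the rank from the abelianization using the preceding Proposition's computation $H_1(X_2^m;\mb Z)\cong\mb Z^{(m-1)(m-2)/2}$. The paper instead identifies the homotopy type explicitly: it lists the cells of $X_2^m$ ($m$ roses $r_i$, $m$ monochromatic banana vertices $b_j$, arcs $\beta_i$ from $b_i$ to $r_i$, and arcs $\beta_{i,j}$ joining $r_i$ to $r_j$), collapses the pairwise disjoint arcs $\beta_i$ to deformation retract onto the complete graph $K_m$, and then uses $K_m\simeq\bigvee_{\binom{m-1}{2}}S^1$. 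The paper's proof is self-contained and yields the concrete homotopy model $K_m$ (in particular it reproves the $H_1$ statement rather than depending on it), whereas yours is shorter but logically dependent on the Mayer--Vietoris computation of $H_1$ from the previous Proposition; that dependence is unproblematic here since that result is established independently. The one step you should not wave away is the enumeration showing connectivity and that no higher-dimensional cubes occur --- in particular that the only admissible rank-one two-legged graphs are the colored roses and the bananas with one leg at each vertex (a banana with both legs at one vertex has an unlabeled bivalent vertex and is inadmissible), so the maximal forests have a single edge; this is exactly the cell inventory the paper writes out explicitly.
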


\begin{proof}
$X_2^m$ is a one-dimensional CW-complex. Its zero-cells are represented by $m$ $i$-colored roses $r_i$ and $m$ ``banana graphs'' (i.e.\ a double edge connecting two vertices) $b_j$, where both edges have length $\frac{1}{2}$ and are colored by a single color $j$. The one-skeleton of $X_2^m$ is formed by arcs $\beta_i$, connecting $b_i$ to $r_i$ by collapsing one of its two edges, and by arcs $\beta_{ij}$, connecting $r_i$ to $r_j$ by either collapsing the $i$-colored or $j$-colored edge (note that in the cubical description of $X_2^m$ the latter arcs correspond to the union of two one-dimensional cubes). The arcs $\beta_i$ are pairwise disjoint, hence contracting them defines a deformation retraction from $X_2^m$ to the complete graph $K_m$ on $m$ vertices (viewed as CW-complex). This, in turn, is homotopy equivalent to a wedge of $\binom{m-1}{2}=\frac{1}{2}(m-1)(m-2)$ circles $S^1$, each one being a generator of $\pi_1(X_2^m)$. 
\end{proof}

Rather surprisingly, in the general case the fundamental group does not recognize the number of colors.

\begin{thm}\label{homotopy}
 For $s \geq 3$ and $m\geq 1$ all the moduli spaces $X_s^m$ are simply connected.
\end{thm}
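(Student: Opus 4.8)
The plan is to show $\pi_1(X_s^m) = 0$ for $s \geq 3$ by exhibiting an explicit deformation retraction (or a sequence of collapses) onto a simply connected subspace, analogous in spirit to the argument for Proposition \ref{prop:pi1s2} but now exploiting that $s \geq 3$ forces enough flexibility in the graph combinatorics. First I would set up the combinatorial description of the one-skeleton and two-skeleton of $X_s^m$ explicitly: the relevant admissible rank-one graphs with $s$ legs are roses $R_{1,s'}$ with self-loops and subdivided loops carrying legs at bivalent basepoints. For a rank-one graph the single loop is partitioned into arcs by the vertices bearing the $s$ legs, and an $m$-coloring assigns colors to the edges. I would enumerate the $0$-, $1$-, and $2$-cubes $(G,F,c)$ and identify which $1$-cubes form loops in $\pi_1$ and which $2$-cubes kill them.

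The core strategy I would pursue is to reduce to the uncolored case $X_s^1 = \m{MG}_{1,s}$, which by \eqref{eq:homouter} has $H_1 = 0$ and is in fact known to be simply connected for $s \geq 3$, and then argue that adding colors introduces no new fundamental group. Concretely I would try to build a deformation retraction of $X_s^m$ that \emph{recolors} edges to a fixed reference color. The idea: pick a ``target'' color, say color $1$, and attempt to contract, cell by cell, the cubes in which the recoloring of a single edge can be realized by sliding a leg along the loop past the point where two differently-colored arcs meet. When $s \geq 3$ there are at least three legs on the single loop, giving enough room to move a leg across an edge and thereby merge two adjacent arcs of different colors into a configuration equivalent to a recolored one. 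Each such elementary move should correspond to a path or a $2$-cube witnessing that a loop generated by a color-change is null-homotopic. I would organize these moves to show every generator of $\pi_1$ (loops of the form ``go out via collapsing the $i$-colored edge, come back via the $j$-colored edge'') bounds a disk, so that $\pi_1(X_s^m) \to \pi_1(X_s^1) = 0$ is an isomorphism.

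More carefully, I would use the cubical structure from Section \ref{ss:ccc} and a van Kampen / edge-path-group computation: choose a maximal tree in the $1$-skeleton, write down generators (one per $1$-cube off the tree) and relators (one per $2$-cube), and show the resulting presentation is trivial. The generators corresponding to recolorings should all be expressible, via the $2$-cube relations coming from forests with two edges, in terms of the uncolored generators, which already vanish. The decisive input is that for $s \geq 3$ every graph has a vertex configuration allowing an edge to be both collapsed and ``passed over'' by a leg, giving the needed $2$-cubes; this fails precisely at $s = 2$, matching Proposition \ref{prop:pi1s2} where $\pi_1$ is a nontrivial free group.

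The hard part will be controlling the bookkeeping of the $2$-cube relations uniformly in $m$ and $s$, i.e.\ verifying that the recoloring moves genuinely assemble into null-homotopies rather than merely reducing the count of colors one at a time (which would only give an inductive argument on $m$ and risk leaving residual relations). I expect the main obstacle to be proving that the deformation retraction onto the monochromatic subspace $A \cong \sqcup_{i=1}^m \m{MG}_{n,s}$ can be upgraded from a homological statement to a $\pi_1$-statement — in particular ensuring the retraction is well-defined across the folded simplices of $X_s^m$ and respects the equivalence relation $\sim$. A clean way around this would be to argue directly that the $2$-skeleton of $X_s^m$ is simply connected by explicitly exhibiting, for each elementary color-swap loop, a spanning disk built from at most a constant number of $2$-cubes, and I would expect the bulk of the write-up to consist of drawing these disks and checking they close up.
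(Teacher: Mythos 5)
Your overall framework---pass to the two-skeleton, choose a maximal tree in the one-skeleton, take the edge-path presentation with one generator per remaining arc $\beta_{i,j}(U,V)$ and one relator per triangle-graph two-cell, and show the presentation collapses---is exactly the framework of the paper's proof. But what you have written is a plan, not a proof: the entire content of the argument lives in the step you defer as ``bookkeeping.'' The paper's proof hinges on two concrete choices that you do not make. First, the maximal tree must be chosen carefully: it consists of all arcs $\beta_i(U,V)$ joining roses to banana graphs together with \emph{one} distinguished arc $\beta_{i,i+1}(U_i,V_i)$ for each consecutive pair of colors. Second, killing a generator $\beta_{i,j}(U,V)$ is not a single ``constant number of $2$-cubes'' move: for $j=i+1$ one needs a chain of up to two triangle-graph disks $S_{i,i,i+1}(A,B,C)$, chosen via an explicit set-theoretic prescription ($B=U_i\cap V$, etc.), passing through an intermediate arc $\beta_*$ before landing on the collapsed tree arc; and for $j>i+1$ one needs an induction on $j-i$, at each stage using a disk $S_{i,j,i+1}$ to trade $\beta_{i,j}$ for $\beta_{i+1,j}$ modulo already-trivialized consecutive-color arcs. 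Asserting that ``$s\geq 3$ gives enough room to move a leg across an edge'' does not substitute for exhibiting these disks and verifying the relations close up; that verification is the theorem.

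Your fallback strategy has a more basic defect. You propose to deformation retract $X_s^m$ onto the monochromatic subspace $A\cong\bigsqcup_{i=1}^m \m{MG}_{1,s}$ and import simple connectivity from the uncolored case. For $m>1$ this subspace is disconnected while $X_s^m$ is connected, so no such retraction exists; and even a retraction onto a single monochromatic copy would require exactly the recoloring homotopies whose existence is the point at issue. Moreover the input you call ``known,'' that $\m{MG}_{1,s}$ is simply connected for all $s\geq 3$, is not established anywhere in your argument (equation \eqref{eq:homouter} only gives $H_1=0$); the paper does not assume it but proves the $m=1$ case alongside all others. You correctly identify where the difficulty sits---upgrading the homological picture to a $\pi_1$-statement---but the proposal stops precisely at the point where the proof has to begin.
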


\begin{rem} 
The theorem should be optimal in the sense that the spaces $X_s^m$ cannot be higher connected. This follows from the \textit{Hurewicz theorem} (see e.g.\ \cite{ah})
together with the results for $H_2(X_s^m;\mb Q)$ in Table \ref{t:homdimc}. In fact, as a corollary we find $\pi_2(X_s^m)\otimes \mb Q \cong H_2(X_s^m;\mb Q).$ Hence $\pi_2(X_s^m)$ also stabilizes for $m\geq 1$.
\end{rem}

For the proof we need a general fact about CW-complexes that allows to compute their fundamental groups by a rather elementary process. 

\begin{prop}[Proposition 6.48 in \cite{switzer}]\label{prop:fugrp}
Let $X$ be a connected CW-complex. Let $i: X^{(1)} \rightarrow X$ denote the inclusion of its one-skeleton and choose a basepoint $x_0 \in X^{(1)}$. Then $i_*: \pi_1(X^{(1)},x_0) \rightarrow \pi_1(X,x_0)$ is an epimorphism and $\ker i_*$ is the normal subgroup generated by the elements $[\varphi_\alpha]$ for $\varphi_\alpha: (S^1,s_0) \rightarrow (X^{(1)},x_0)$ the attaching maps of the two-cells $e^2_\alpha$.
\end{prop}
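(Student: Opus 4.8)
The plan is to combine the cellular approximation theorem with the Seifert--van Kampen theorem, following the classical computation of the fundamental group of a CW-complex from its $2$-skeleton.

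First I would reduce everything to the $2$-skeleton. By cellular approximation every map $S^1 \to X$ is homotopic rel basepoint to a cellular map, and likewise every homotopy $S^1 \times I \to X$ can be deformed into $X^{(2)}$, since $S^1$ and $S^1 \times I$ have dimensions $1$ and $2$ respectively. Consequently the inclusion $X^{(2)} \hookrightarrow X$ induces an isomorphism $\pi_1(X^{(2)},x_0) \xrightarrow{\sim} \pi_1(X,x_0)$, so cells of dimension $\geq 3$ are irrelevant for the fundamental group. This same argument gives surjectivity of $i_*$ at once: a class in $\pi_1(X,x_0)$ is represented by a cellular loop, which by definition lands in $X^{(1)}$.

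The substance of the statement lies in identifying $\ker i_*$, for which I would analyze how the $2$-cells are attached. Writing $X^{(2)} = X^{(1)} \cup_{\{\varphi_\alpha\}} \bigsqcup_\alpha e^2_\alpha$, I would apply Seifert--van Kampen to the open cover of $X^{(2)}$ given by $U = X^{(2)} \setminus \{p_\alpha\}_\alpha$, where $p_\alpha$ is the center of the cell $e^2_\alpha$, together with the open interiors $V_\alpha$ of the cells. The set $U$ deformation retracts onto $X^{(1)}$, each $V_\alpha$ is contractible, and each intersection $U \cap V_\alpha$ is a punctured disk homotopy equivalent to a circle mapping to $X^{(1)}$ by $\varphi_\alpha$. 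The van Kampen pushout then shows that passing from $X^{(1)}$ to $X^{(2)}$ imposes exactly the relations killing the classes $[\varphi_\alpha]$. Since the image $\varphi_\alpha(s_0)$ need not equal $x_0$, one first joins it to $x_0$ by a path $\gamma_\alpha \subset X^{(1)}$; the relevant class becomes $[\gamma_\alpha \cdot \varphi_\alpha \cdot \gamma_\alpha^{-1}]$, but varying $\gamma_\alpha$ only conjugates $[\varphi_\alpha]$ and therefore leaves the generated normal subgroup unchanged. Collecting all $\alpha$ yields $\pi_1(X^{(2)},x_0) \cong \pi_1(X^{(1)},x_0)/N$ with $N$ the normal closure of the set $\{[\varphi_\alpha]\}$, whence $\ker i_* = N$.

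The main obstacle is making the van Kampen step rigorous when there are infinitely many $2$-cells, since the elementary two-open-set formulation does not apply directly. I would resolve this using the version of Seifert--van Kampen for arbitrary open covers (equivalently, the fundamental groupoid formulation), expressing $\pi_1(X^{(2)},x_0)$ as the colimit of the pushout diagram indexed by the cover above. The weak topology of the CW-complex is exactly what guarantees this works: any loop or null-homotopy has compact image and hence meets only finitely many cells, so every individual verification reduces to the case of finitely many attached disks, where the classical statement applies.
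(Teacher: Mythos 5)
The paper gives no proof of this proposition at all: it is quoted as Proposition~6.48 of Switzer's book and used as a black box in the proof of Theorem~\ref{homotopy}, so there is no in-paper argument to compare against. What you have written is the standard textbook proof (essentially Hatcher's Proposition~1.26), and it is correct. Cellular approximation does give surjectivity of $i_*$ immediately and reduces the whole question to $X^{(2)}$, since a based loop is homotopic rel basepoint to a cellular loop in $X^{(1)}$ and a null-homotopy $S^1\times I\to X$, being a map from a $2$-dimensional complex, can be pushed rel $S^1\times\partial I$ into $X^{(2)}$. Your identification of $\ker i_*$ via the cover $U=X^{(2)}\setminus\{p_\alpha\}$, $V_\alpha=\mathring{e}^2_\alpha$ is also sound: $U$ deformation retracts onto $X^{(1)}$ by radial retraction in each punctured cell, and under this retraction the circle $U\cap V_\alpha\simeq S^1$ maps by $\varphi_\alpha$, so each cell imposes exactly the relation $[\gamma_\alpha\cdot\varphi_\alpha\cdot\gamma_\alpha^{-1}]=e$, and as you note the choice of $\gamma_\alpha$ only conjugates the relator and leaves the normal closure unchanged.

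The one genuinely delicate point is the one you flag, and your fix is the right one, but it is worth being precise about \emph{why} the naive van Kampen fails here: not only can there be infinitely many cells, but no $V_\alpha$ contains the basepoint $x_0$ and the pairwise intersections $V_\alpha\cap V_\beta$ are empty, so the single-basepoint open-cover formulation is inapplicable even for two cells. The groupoid version on a set of basepoints (one in each $V_\alpha$, plus $x_0$) handles both defects at once; an alternative, closer to Hatcher, is to thicken each $V_\alpha$ by an open neighborhood of the path $\gamma_\alpha$ so that every member of the cover contains $x_0$ and all relevant intersections are path-connected, after which the classical statement applies. Your compactness remark correctly disposes of infinitude of cells: any loop or homotopy has compact image, hence meets only finitely many open cells, so both surjectivity onto the colimit and the verification of relations reduce to finite subcomplexes. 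Note also that the proposition as stated in the paper tacitly assumes the attaching maps are based at $x_0$; your $\gamma_\alpha$-conjugation argument is exactly what justifies this normalization in general.
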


\begin{proof}(of Theorem \ref{homotopy})
 By Proposition \ref{prop:fugrp} we only need to consider $X:=(X_2^m)^{(2)}$, the two-skeleton of $X_s^m$. It has the following CW-structure: 
\begin{itemize}
 \item Its 0-dimensional cells $r_i$ and $b_i(U,V)$ correspond to 
\begin{itemize}
 \item[-] all roses $R_i$ with their single edge colored by $i \in \set m$,
 \item[-] all banana graphs $B_i(U,V)$ with both edges of length $\frac{1}{2}$ and colored by $i \in \set m$ and $U \sqcup V = \set s$ a partition of the set of legs to the two vertices of $B_i$.
\end{itemize}
 \item Its 1-dimensional cells correspond to 
 \begin{itemize} 
 \item[-] arcs $\beta_i(U,V)$ of length $\frac{1}{2}$ parametrizing banana graphs $B_i(U,V)$ with two edges of unequal length, both colored by $i \in \set m$ and $U \sqcup V = \set s$,
  \item[-] arcs $\beta_{i,j}(U,V)=$ of unit length parametrizing banana graphs $B_{i,j}(U,V)$ with both edges colored differently by $i,j \in \set m$, $i\neq j$, and $U \sqcup V = \set s$.
 \end{itemize}
\item Its 2-dimensional cells $S_{i,j,k}(A,B,C)$ parametrize ``triangle graphs'' (i.e.\ cyclic graphs on three labeled vertices) $T_{i,j,k}(U,V,W)$, colored by $i,j,k \in \set m$, and $U \sqcup V \sqcup W = \set s$ distributing the legs onto the three vertices of $T_{i,j,k}$.
\end{itemize}

The attaching maps can be described as follows. Arcs $\beta_i(U,V)$ connect $r_i$ to $b_i(U,V)$, and arcs $\beta_{i,j}(U,V)$ connect $r_i$ to $r_j$. A two-cell $S_{i,j,k}(A,B,C)$ is attached to three arcs $\beta_{k,j}(A\sqcup B,C)$, $\beta_{i,k}(A,B\sqcup C)$ and $\beta_{i,j}(A\sqcup C,B)$, represented by bananas obtained from $T_{i,j,k}(A,B,C)$ by collapsing a single edge (here $\beta_{i,i}:=\beta_i$ if $i,j,k$ are not all distinct). 

\textbf{Step 1.} Replace $X$ with a simpler homotopy equivalent space.  

Let $Z\subset X$ denote the subcomplex consisting of all $m$ arcs from the rose vertices $r_i$ to the banana vertices $b_i(U,V)$ and $m-1$ arcs from $r_i$ to $r_{i+1}$ defined by varying the edge lengths in $B_{i,i+1}(U_i,V_i)$ (here $i\in \{1,\ldots,m-1\}$ and $U_i\sqcup V_i= \set s$ freely chosen partitions). 
Note that $Z$ forms a maximal tree in the one-skeleton of $X$. In particular, all zero-dimensional cells of $X$ lie in $Z$. Therefore, in the process of collapsing this subcomplex all these cells get identified to a single vertex $y_0$. Since $Z$ is contractible, the quotient map $\pi: X \rightarrow Y:=X/Z$ is a homotopy equivalence (see e.g.\ \cite{ah}).

The quotient $Y$ has the following CW-structure:
\begin{itemize}
 \item It has a single vertex $y_0$. 
 \item Its 1-dimensional cells $\beta_{i,j}(U,V)$ are arcs describing bananas $B_{i,j}(U,V)$ with both their edges colored differently by $i,j \in \set m$, $i\neq j$, and $U \sqcup V = \set s$. If $j=i+1$, then $U\sqcup V$ is any partition except the special choice from the definition of $Z$, i.e.\ $U\neq U_i$, $V\neq V_i$. Note that every arc describes a loop in $Y$, based at $y_0$.
\item Its 2-dimensional cells are disks $S_{i,j,k}(A,B,C)$ parametrizing triangle graphs $T_{i,j,k}(A,B,C)$ where $i,j,k \in \set m$ and $A \sqcup B \sqcup C = \set s$. The attaching maps of these disks are described below. In the special case $i=j=k$ we have $\partial S_{i,i,i}(A,B,C)=\{ y_0 \}$, so $S_{i,i,i}(A,B,C)$ is a sphere. 
\end{itemize}
 
\textbf{Step 2.} Calculate $\pi_1(Y,y_0)$.

Since the one-skeleton of $Y$ is a wedge of circles, $\pi_1(Y,y_0)$ is generated by all arcs $\beta_{i,j}$ described above, subject to relations induced by the attaching maps of two-cells. We now show that these relations kill all generators.

\begin{figure}[!htb]
 \centering
 \includegraphics[width=13cm]{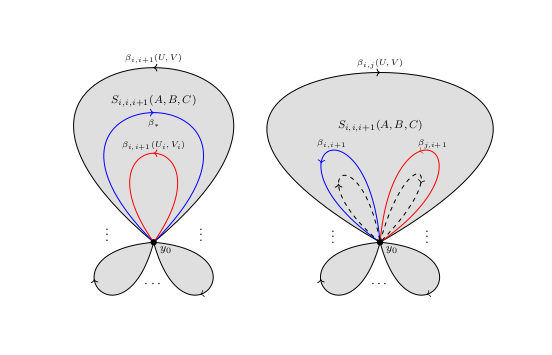}
\caption{Arcs in the two-skeleton of $Y$.}
\label{fig:twoskelet}
 \end{figure}

Let $\beta_{i,j}(U,V)$ be such a generator. First we consider the case $j=i+1$ and assume $U \cap U_i \neq \emptyset$ (otherwise we permute $U$ and $V$).
The arc $\beta_{i,i+1}(U,V)$ is one boundary component of the disk $S_{i,i,i+1}(A,B,C)$ with 
\begin{equation}
 \partial S_{i,i,i+1}(A,B,C)= \beta_{i,i+1}(A\sqcup B,C) \cup \beta_{i,i+1}(A,B\sqcup C) 
\end{equation}
such that $A=U$ and $B \sqcup C=V$.
We choose 
\begin{equation*}
   B= U_i \cap V, \quad  C=  ( U_i \setminus U  ) \triangle V,
\end{equation*}
 where $\triangle$ denotes the symmetric difference of sets. Note that $C$ is also given by $\set{s} \setminus (U \cup (U_i \cap V))$. Then the other boundary component of $S_{i,i,i+1}(A,B,C)$ is the arc 
\begin{equation*}
\beta_* := \beta_{i,i+1}\big( U\cup (U_i \cap V), \set s \setminus (U \cup (U_i \cap V)) \big).
\end{equation*}
If $ U\subset U_i $, this is the arc $\beta_{i,i+1}(U_i,V_i)$ which has been collapsed in the construction of $Y$. In other words, the disk $S_{i,i,i+1}(A,B,C)$ has only $\beta_{i,i+1}(U,V)$ as boundary. Thus, $\beta_{i,i+1}(U,V)$ is homotopic to the constant loop, 
\begin{equation*}
 [\beta_{i,i+1}(U,V)]=[y_0]= e \in \pi_1(Y,y_0).
\end{equation*}
If not, then the arc $\beta_*$ lies in the boundary of another disk $S_{i,i,i+1}(M,N,R)$ with
\begin{equation*}
  \partial S_{i,i,i+1}(M,N,R) = \beta_{i,i+1}(M\sqcup N, R) \cup \beta_{i,i+1}(M,R\sqcup N)
\end{equation*}
for 
\begin{align*}
& M= \set s \setminus (U \cup (U_i\cap V)) \\
& R= \big( U \cap (U_i \cap V) \big) \cap U_i = U_i  \\
&  S= \set s \setminus ( M \sqcup R ).
\end{align*}
Then we find $\partial S_{i,i,i+1}(M,N,R)$ to be given by
\begin{equation*}
   \beta_{i,i+1}(U_i, V_i) \cup \beta_{i,i+1} \big( U \cup (U_i \cap V) , \set s  \setminus   (U \cup (U_i\cap V)) \big)
\end{equation*}
and therefore $[\beta_{i,i+1}(U,V)]=[\beta_*]=[y_0]= e$. This is depicted in the left picture in Figure \ref{fig:twoskelet}.

Algebraically, this sequence of relations reads
\begin{equation*}
 e =  \beta_*^{-1} \cdot \beta_{i,i+1}(U,V), \quad  e = \beta_{i,i+1}(U_i,V_i)^{-1} \cdot \beta_*
\end{equation*}
and since $\beta_{i,i+1}(U_i,V_i)=e$ this shows $\beta_{i,i+1}(U,V)=e$.

Now suppose $i \neq j-1$, say $i< j-1$. In this case, $\beta_{i,j}(U,V)$ is the boundary component of a disk $S_{i,j,k}(A,B,C)$ for $k=i+1$, $A\sqcup C=U$ and $B=V$,
\begin{align*}
 \partial S_{i,j,k}(A,B,C) &= \beta_{j,k}(A \sqcup B,C) \cup \beta_{i,k}(A,B\sqcup C) \cup \beta_{i,j}(A\sqcup C,B) \\
 & = \beta_{j,i+1}(A \sqcup B,C) \cup \beta_{i,i+1}(A,B\sqcup C) \cup 
 \beta_{i,j}(U,V).
\end{align*}
By the previous arguments we know that the loop $\beta_{i,i+1}(A,B\sqcup C)$ is trivial in $\pi_1(Y,y_0)$. Now repeat this step for the other arc $\beta_{j,i+1}(A \sqcup B,C)=:\beta_{i+1,j}(M,N)$, i.e.\ choose a disk $S_{i+1,j,i+2}(I,J,K)$ with $I \sqcup K=M$ and $J=N$, such that
\begin{equation*}
 \partial S_{i+1,j,i+2}(I,J,K) = \beta_{j,i+2}(I \sqcup J,K) \cup  \beta_{i+1,i+2}(I,J \sqcup K) \cup \beta_{i+1,j}(M,N).
\end{equation*}
Again, $\beta_{i+1,i+2}(I,J \sqcup K)$ is trivial, so this equation expresses $\beta_{i+1,j}(M,N)$ by $\beta_{i+2,j}(I \sqcup J,K)$. Repeat this process until after $j-(i+1)$ steps all arcs are revealed to be trivial, so that we conclude $\beta_{i,j}(U,V)= e$.
\end{proof}
 
\begin{rem}
 This method of proof does not work for the moduli spaces of holocolored graphs (indeed, $\m {MHG}_{1,3}$ and $\m {MHG}_{1,3}$ are homeomorphic to the two-dimensional torus $T^2$ which has $\mb Z^2$ as fundamental group). Roughly speaking, although $X_s^m$ seems to be more complicated at first sight, the additional ``stuff'' allows to simplify the relations in the presentation of $\pi_1(X_m^s)$ as we have done above.
\end{rem}

\subsection{Euler characteristic}
The Euler characteristic of $X_s^m$ can be computed by simply counting all admissible colored graphs representing cells in $Y \simeq X_m^s$, the CW-complex described in the proof of Theorem \ref{homotopy}.

\begin{prop}\label{prop:ecm}
 The Euler characteristic $\chi(X_m^s)$ is given by 
 \begin{equation*}
\chi(X_m^s)= 2^{s-1} \frac{m(1-m)}{2} + \frac{m(m+1)}{2} + \sum_{k=3}^s (-1)^{k-1} \stirling{s}{k} \frac{(k-1)!}{2} m^k.
 \end{equation*}
\end{prop}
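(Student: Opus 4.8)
The plan is to compute $\chi(X_s^m)$ as the (compactly supported) Euler characteristic of an honest cell decomposition, exploiting that the admissible rank-one graphs with $s$ legs are completely classified: being $1$PI of rank one they are cycles $C_k$ on $k$ vertices (with $k=1$ the rose $R_{1,s}$ and $k=2$ the banana), and admissibility forces every vertex to carry at least one leg, so $1\le k\le s$. After choosing a distribution of the labelled legs among the $k$ vertices and a coloring $c\colon E\to\set{s}$, isomorphic configurations are identified by the dihedral symmetry $D_k$ of the cycle, and the normalized edge lengths form an open $(k-1)$-simplex. First I would record that
\[
\chi(X_s^m)=\sum_{[(G,c)]}\chi_c\!\big(\sigma^\circ_{(G,c)}/\mathrm{Aut}(G,c)\big),
\]
by additivity of $\chi_c$ over this decomposition, together with the orbit formula $\chi_c(\Delta^\circ/\Gamma)=\tfrac1{|\Gamma|}\sum_{g\in\Gamma}(-1)^{o(g)-1}$, where $o(g)$ counts the orbits of $g$ on the edge set. (Equivalently one may count cells directly in the cubical complex of Section~\ref{ss:ccc}, which for $n=1$ carries no folding.)

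The bulk of the formula, namely the terms with $k\ge3$, is the easy part. Here I would observe that since the legs are labelled and every vertex receives a nonempty block, distinct vertices carry distinct leg-sets; hence no nontrivial element of $D_k$ fixes a configuration, the action on (leg-placement, coloring) is free, and $\mathrm{Aut}(G,c)$ is trivial. Thus each isomorphism class is an honest $(k-1)$-cell, and their number (with $k!\,\stirling{s}{k}$ the number of surjections $\set{s}\to V(C_k)$) is
\[
\frac{1}{|D_k|}\cdot k!\,\stirling{s}{k}\cdot m^{k}=\frac{k!\,\stirling{s}{k}\,m^{k}}{2k}=\frac{(k-1)!}{2}\stirling{s}{k}m^{k},
\]
contributing $(-1)^{k-1}\tfrac{(k-1)!}{2}\stirling{s}{k}m^{k}$, which is exactly the stated sum.

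The delicate part, and the main obstacle, is the low-dimensional contribution $k=1,2$, where automorphisms act nontrivially and produce the folding discussed after Figure~\ref{fig:simpfold}. The roses contribute $m$ zero-cells (one per color; the loop-flip acts trivially on the single point). For bananas I would split according to the two edge colors: a banana with \emph{different} colors has trivial automorphism group and gives an honest $1$-cell, of which there are $(2^{s}-2)m(m-1)/4$; whereas a \emph{monochromatic} banana admits the edge-swap automorphism. This $\mathbb Z/2$ folds the length interval $\{(x,1-x)\}$ onto $[\tfrac12,1)$, creating one extra $0$-cell (the equal-length graph) together with one $1$-cell, whose contributions $+1$ and $-1$ cancel (equivalently $\chi_c=0$). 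Hence the net $k=2$ contribution is $-\tfrac{(2^{s-1}-1)m(m-1)}{2}$.

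Finally I would add the three pieces. The $k\ge3$ terms already reproduce the sum in the statement, while the short simplification
\[
m-\frac{(2^{s-1}-1)m(m-1)}{2}=2^{s-1}\frac{m(1-m)}{2}+\frac{m(m+1)}{2}
\]
turns the $k=1,2$ part into the first two summands, completing the proof. As sanity checks this yields $\chi(X_1^m)=m$ and $\chi(X_2^m)=\tfrac{m(3-m)}{2}=1-\tfrac12(m-1)(m-2)$, consistent with the $m$ isolated points in the $s=1$ case and with Proposition~\ref{prop:pi1s2}. The only genuine subtlety is the careful bookkeeping of the folded monochromatic bananas; every other cell is counted by a free orbit count.
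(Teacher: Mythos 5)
Your proof is correct, and its combinatorial core coincides with the paper's: the count $\stirling{s}{k}\frac{(k-1)!}{2}m^k$ of isomorphism classes of colored $k$-gons with a surjective leg distribution for $k\geq 3$, the $m$ roses, and the value $\stirling{s}{2}\frac{m(m-1)}{2}=(2^{s-1}-1)\frac{m(m-1)}{2}$ for the bananas. The packaging differs in how the degenerate low-dimensional cells are handled. The paper counts cells of the CW-complex $Y\simeq X_s^m$ obtained in the proof of Theorem \ref{homotopy} by collapsing a maximal tree in the one-skeleton, so the monochromatic-banana arcs and their extra midpoint vertices are absorbed into the collapsed tree, leaving a single $0$-cell and $\stirling{s}{2}\frac{m(m-1)}{2}-(m-1)$ one-cells; you instead stay on $X_s^m$ itself and invoke additivity of the compactly supported Euler characteristic over the strata $\sigma^\circ_{(G,c)}/\mathrm{Aut}(G,c)$, noting that each folded monochromatic banana is a half-open arc with $\chi_c=0$. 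Both devices yield the identical tally, but your route is somewhat more self-contained: it does not rely on the CW-structure of $Y$ (which the homotopy proof only sets up for the two-skeleton), and it makes explicit the freeness of the dihedral action on leg distributions for $k\geq 3$ — the point the paper passes over with ``the same argument gives the number of cells of dimension $k-1$.'' Your sanity checks against $\chi(X_1^m)=m$ and $H_1(X_2^m)$ are a welcome addition.
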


\begin{proof}
 Recall the notation from the proof of Theorem \ref{homotopy}. The CW-complex $Y$ has a single zero-dimensional cell, represented by a rose graph with $s$ thorns $R_{1,s}$. One-dimensional cells are represented by banana graphs $B_{i,j}(U,V)$ with $i\neq j \in \{ 1, \ldots, m\}$ and $U\sqcup V = \{1,\ldots,s \}$, except the especially chosen elements $B_{i,i+1}(U_i,V_i)$. Hence, there are
 \begin{equation*}
 \stirling{s}{2} \frac{m(m-1)}{2} - (m-1)                                                                                                                                                                                                                                                                                                                                                                                                                                                                                                                                                                       \end{equation*}
 different representatives. The same argument gives the number of cells of dimension $k-1$. It is the number of leg partitions times non-equivalent vertex configurations times colorings of a graph with $k$ edges. This number is given by the formula
 \begin{equation*}
  \stirling{s}{k} \frac{(k-1)!}{2} m^k
 \end{equation*}
and therefore
\begin{align*}
 \chi(X_m^s) = & m -  \stirling{s}{2} \frac{m(m-1)}{2}  + \sum_{k=3}^s (-1)^{k-1} \stirling{s}{k} \frac{(k-1)!}{2} m^k \\
  = & 2^{s-1}\frac{m(1-m)}{2} + \frac{m(m+1)}{2} + \sum_{k=3}^s (-1)^{k-1} \stirling{s}{k} \frac{(k-1)!}{2} m^k.
\end{align*}
\end{proof}

For $m=1$ this simplifies considerably.
\begin{prop}
 The sequence $\chi(s):= \chi (\m MG_{1,s})$ is given by
 \begin{equation*}
  \chi(1)=1,\ \chi(s) = 2^{s-2} \quad  \text{ or } \quad \chi(1)=1, \ \chi(s) = \sum_{k=1}^{s-1} \chi(k)
 \end{equation*}
with generating function $G(t)=t (1-t) ( 1-2t )^{-1}$.
\end{prop}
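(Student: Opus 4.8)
The plan is to specialize the previous proposition to $m=1$ and then recognize the resulting sequence. Setting $m=1$ in Proposition~\ref{prop:ecm} kills the middle terms cleanly: the term $2^{s-1}\frac{m(1-m)}{2}$ vanishes since $1-m=0$, while $\frac{m(m+1)}{2}=1$ and each summand $\stirling{s}{k}\frac{(k-1)!}{2}m^k$ collapses to $\stirling{s}{k}\frac{(k-1)!}{2}$. So I would first write
\begin{equation*}
 \chi(s) = 1 + \sum_{k=3}^{s} (-1)^{k-1} \stirling{s}{k} \frac{(k-1)!}{2},
\end{equation*}
being careful to treat the small cases $s=1,2$ separately (where the sum is empty or the CW-complex is lower-dimensional), and check directly that $\chi(1)=1$ and $\chi(2)=1$ match the claimed values.

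Next I would prove the closed form $\chi(s)=2^{s-2}$ for $s\geq 2$. The cleanest route is to recognize the combinatorial meaning of $\stirling{s}{k}\frac{(k-1)!}{2}$: the Stirling number $\stirling{s}{k}$ counts partitions of $\set s$ into $k$ blocks, and $(k-1)!$ counts cyclic orderings of those $k$ blocks, so $\stirling{s}{k}\frac{(k-1)!}{2}$ counts cyclic arrangements up to reflection, i.e.\ exactly the unlabeled triangle/cycle graphs on $s$ legs distributed onto $k$ vertices. Rather than manipulate this directly, I would instead work with the generating function, which the statement hands us as a target: I claim $G(t)=\sum_{s\geq 1}\chi(s)t^s = t(1-t)(1-2t)^{-1}$. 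Expanding $(1-2t)^{-1}=\sum_{j\geq 0}2^j t^j$ gives $t(1-t)\sum_j 2^j t^j = \sum_{s\geq 1}(2^{s-1}-2^{s-2})t^s$ for $s\geq 2$ (with the $s=1$ coefficient equal to $1$), and $2^{s-1}-2^{s-2}=2^{s-2}$, which confirms both the closed form and that $\chi(1)=1$.

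For the recursive characterization $\chi(s)=\sum_{k=1}^{s-1}\chi(k)$, I would argue directly from the generating function identity $(1-2t)G(t)=t(1-t)=t-t^2$, equivalently $G(t)-2tG(t)=t-t^2$. Comparing coefficients of $t^s$ for $s\geq 3$ gives $\chi(s)-2\chi(s-1)=0$, hence $\chi(s)=2\chi(s-1)=\chi(s-1)+2^{s-3}\cdot 2 = \chi(s-1)+\chi(s-1)$; combined with $\sum_{k=1}^{s-2}\chi(k)=\chi(s-1)$ (the same statement one step down, which holds by induction with base $\chi(2)=1=\chi(1)$) this yields $\sum_{k=1}^{s-1}\chi(k)=\chi(s-1)+\chi(s-1)=2\chi(s-1)=\chi(s)$. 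So the recursion follows from the functional equation by an easy induction. The only genuinely delicate point, and the one I would flag as the main obstacle, is the bookkeeping at the low end: verifying that the general cell-count formula from Proposition~\ref{prop:ecm} actually produces the correct values for $s=1$ and $s=2$, where the two-dimensional cells are absent and the CW-structure of $Y$ degenerates. Once those base cases are pinned down, the rest is a routine generating-function computation.
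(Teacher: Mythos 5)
Your reduction to the $m=1$ specialization of Proposition~\ref{prop:ecm} and your check of the base cases $s=1,2$ are fine, but the core of your argument is circular. Having written
$\chi(s) = 1 + \sum_{k=3}^{s}(-1)^{k-1}\stirling{s}{k}\frac{(k-1)!}{2}$,
you then announce ``I claim $G(t)=t(1-t)(1-2t)^{-1}$'' and proceed to expand the \emph{right-hand side} of that claim to read off $2^{s-2}$, and later to derive the recursion from the functional equation $(1-2t)G(t)=t-t^2$. At no point do you prove that the power series whose coefficients are the Stirling-number expressions above actually equals $t(1-t)(1-2t)^{-1}$. What you have shown is only that the three advertised descriptions (closed form, recursion, generating function) are mutually consistent; the identity
$1+\sum_{k=3}^{s}(-1)^{k-1}\stirling{s}{k}\frac{(k-1)!}{2}=2^{s-2}$
is precisely the content of the proposition and is never established. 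You flag the low-$s$ bookkeeping as the main obstacle, but that part is easy; the missing step is this identity.

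The paper closes exactly this gap by a direct computation: using the Stirling recurrence $\stirling{s+1}{k}=k\stirling{s}{k}+\stirling{s}{k-1}$ one shows $\chi(s+1)-\chi(s)=\chi(s)$ straight from the cell-count formula, and both the closed form and the summation recursion then follow by induction from $\chi(2)=1$; the generating function is derived \emph{afterwards} from the recursion, not assumed. To repair your proof you would need to supply an argument of this kind (the Stirling recurrence route, or a combinatorial bijection exploiting your observation that $\stirling{s}{k}\frac{(k-1)!}{2}$ counts cyclic set partitions up to reflection, e.g.\ by pairing off cells in the alternating sum). As written, the proposal assumes what it sets out to prove.
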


\begin{proof}
For $s>2$ a long, but straightforward calculation, using $\stirling{s+1}{k}=k\stirling{s}{k} + \stirling{s}{k-1}$, shows that $\chi(s+1)-\chi(s)=\chi(s)$ from which both formulae follow. For $G(t)=\sum_{s=1}\chi(s)t^s$ we calculate
 \begin{align*}
   tG & =  \sum_{s=1}\chi(s)t^{s+1} = \chi(1)t^2 + \chi(2)t^3 + \ldots \\
    & = \frac{\chi(2)}{2}t^2 + \frac{\chi(2)}{2}t^2 + \frac{\chi(3)}{2}t^3 + \ldots \\
    & = \frac{ G -\chi(1)t }{2} + \frac{\chi(2)}{2}t^2 
 \end{align*}
and therefore $G(t)=t (1-t) ( 1-2t )^{-1}$.
\end{proof}

\subsection{Highest non-trivial Betti number}

Let $b_s(m)$ denote the highest rank Betti number $h_{s-1}(X_s^m)= \dim H_{s-1}(X_s^m;\mb Q)$ which, if Conjecture \ref{conj} holds, is the only Betti number of $X_s^m$ depending on $m$. Above we have seen that $b_1(m)=h_0(X_1^m)=m$ and $b_2(m)=\frac{1}{2}(m-1)(m-2)$. Moreover, the results in Table \ref{t:homdimc} suggest that $b_s(m)$ grows polynomially in $m$.

\begin{thm}\label{t:bettipoly}
The Betti number $b_s(m)=h_{s-1}(X_s^m)$ grows at most polynomially in $m$ of degree $s$.
\end{thm}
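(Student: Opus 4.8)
The plan is to bound the highest Betti number $b_s(m)=h_{s-1}(X_s^m)$ from above by exhibiting an explicit chain complex whose top chain group has dimension polynomial in $m$ of degree $s$, and arguing that the top homology is a subspace thereof. Recall from the proof of Theorem \ref{homotopy} that $X_s^m$ is homotopy equivalent to a CW-complex $Y$ whose $k$-cells correspond to admissible $m$-colored rank one graphs with $k$ edges, $s$ legs distributed onto the $k$ vertices, and a cyclic arrangement of those vertices. The key observation is that $H_{s-1}(X_s^m;\mb Q)$ sits inside the top-dimensional cellular chain group $C_{s-1}(Y)$, so it suffices to count the number of $(s-1)$-cells, i.e.\ the top-dimensional cubes (or cells) of $X_s^m$.

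First I would count the top-dimensional cells. An $(s-1)$-cell of $Y$ is parametrized by a \emph{triangle-type graph} generalization: a cyclic graph on $k=s$ vertices (the maximal number, since each of the $s$ legs must sit at its own vertex to achieve dimension $s-1$), together with a choice of coloring $c:E\to \set m$ of its $s$ edges. Using the Stirling-number counting already employed in the proof of Proposition \ref{prop:ecm}, the number of such cells is
\begin{equation*}
\dim C_{s-1}(Y) = \stirling{s}{s}\frac{(s-1)!}{2}m^s = \frac{(s-1)!}{2}m^s,
\end{equation*}
which is a polynomial in $m$ of degree exactly $s$. Since $b_s(m)=\dim H_{s-1}(X_s^m;\mb Q)\leq \dim C_{s-1}(Y)$, this immediately yields the claimed bound. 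The leading coefficient $\tfrac{(s-1)!}{2}$ also matches the values $b_1(m)=m$ and $b_2(m)=\tfrac{1}{2}(m-1)(m-2)$ computed above in degree $s$.

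The main step requiring care is verifying that the top chain group indeed has this dimension, which reduces to the combinatorial count of admissible colored cyclic graphs with all vertices monovalent-augmented by a single leg. The factor $\tfrac{(k-1)!}{2}$ arises because cyclic arrangements of $k$ labeled vertices are counted up to rotation and reflection (the dihedral symmetry of the cycle), exactly as in Proposition \ref{prop:ecm}; I would reuse that bookkeeping verbatim, specialized to $k=s$. The factor $m^s$ counts colorings of the $s$ edges with no injectivity constraint, since we are in the $m$-colored (not holocolored) regime. The hard part will be nothing deep here—the argument is essentially an upper bound by counting generators—but one must be careful to confirm that no top-dimensional cells are \emph{missing} in the one-loop case (as noted in the excerpt, for $n=1$ no faces lie at infinity), so that the full count of top cells really does bound the top homology without correction terms.

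I expect this upper bound to be the entire content of the statement as phrased (``at most polynomially of degree $s$''), so no lower bound or exactness argument is needed; the polynomial growth is governed purely by the generator count. Should one wish to sharpen ``at most degree $s$'' to ``exactly degree $s$,'' one would additionally need a nonvanishing argument for the leading term, for instance by producing $\sim m^s$ linearly independent cycles, but that lies beyond what the theorem claims.
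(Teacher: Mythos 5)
Your argument is essentially the paper's own proof: both bound $b_s(m)=h_{s-1}(X_s^m)\leq\dim C_{s-1}=\stirling{s}{s}\frac{(s-1)!}{2}\,m^s$ using the cell count from Proposition \ref{prop:ecm} (the paper additionally remarks on the Euler characteristic, which only matters for sharpness, not for the stated upper bound). One immaterial slip: the leading coefficient $\frac{(s-1)!}{2}$ does \emph{not} match $b_1(m)=m$ (it gives $\frac{1}{2}$), since the cell-count formula carries corrections for $k=1,2$; the paper sidesteps this by assuming $s\geq 3$.
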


\begin{proof}
Let $s\geq 3$. By Proposition \ref{prop:ecm} the Euler characteristic $\chi(X_s^m)$ is a polynomial in $m$ of degree $s$.  Moreover, the number of $(k-1)$-dimensional cells in $X_s^m$ is $\stirling{s}{k}\frac{(k-1)!}{2} m^k$, so that 
\begin{equation*}
h_{k-1}(X_s^m)\leq \dim C_{k-1}(X_s^m;\mb Q) = \stirling{s}{k}\frac{(k-1)!}{2} m^k
\end{equation*} 
bounds the $(k-1)$-th Betti number from above by a polynomial of degree $k$. 

This shows that $b_s(m)$ is bounded by a polynomial of degree $s$, except if $\chi(X_s^m)= 0$ which may happen for at most finitely many $m$.\footnote{Furthermore, the \textit{integer root theorem} implies $b_s(m)\neq 0$ for all even $m \in 2\mb Z\setminus\{0\}$.}
\end{proof}

\begin{rem}
If Conjecture \ref{conj} holds, this growth estimate is sharp, i.e.\ $b_s$ is indeed a polynomial in $m$ of degree $s$. It could then be determined explicitly, either by computing $\chi(X_s^m)$ and the uncolored homology groups from \eqref{eq:homouter} or by interpolating the values $b_s(m)$ for $m=1,\ldots,s+1$. 
\end{rem}

\section{Holocolored graphs}\label{s:hc}

In contrast to the case of arbitrarily colored graphs the spaces $\m{MHG}_{1,s}$ and $\m {MRG}_{1,s}$ can be directly (without subdivision of cells) considered as $\Delta$-complexes. Therefore, we may use instead of the cubical chain complex the associated semi-simplicial chain complex which has fewer generators. We thereby obtain a \textit{graph complex}, somewhat in the spirit of Kontsevich's \textit{graph homology} \cite{ko1,ko2}, that computes the homology of the moduli spaces of holocolored graphs.

\subsection{The moduli spaces $\m{MHG}_{1,s} $}
Let $\ti X_s$ denote the space $\m{MHG}_{1,s}$.  
In the semi-simplicial chain complex for $\ti X_s$ the $k$-dimensional chain groups are generated by simplices $\Delta_{(G,c)}$, one for each isomorphism class of admissible colored graphs on $k+1$ edges. As in the cubical case we abbreviate these generators by $(G,c)$. The boundary operator of this chain complex contains the terms with shrunken edges only and reads\footnote{Note that this simple definition works only for the case $n=1$. Otherwise it might not be possible to shrink an edge to zero length, owing to the occurrence of missing faces.}
\begin{equation*}
\partial_k (G, c):=\sum_{i=1}^{k}(-1)^{i-1} (G_{e_i}, c_{e_i}).
\end{equation*}

Recall that the number of allowed colors is $3(n-1)+s$ which in the one loop case equals $s$, the maximal number of internal edges of the graphs representing elements in $\ti X_s$. 

\begin{figure}[h!]
  \centering
	\includegraphics[width=11cm]{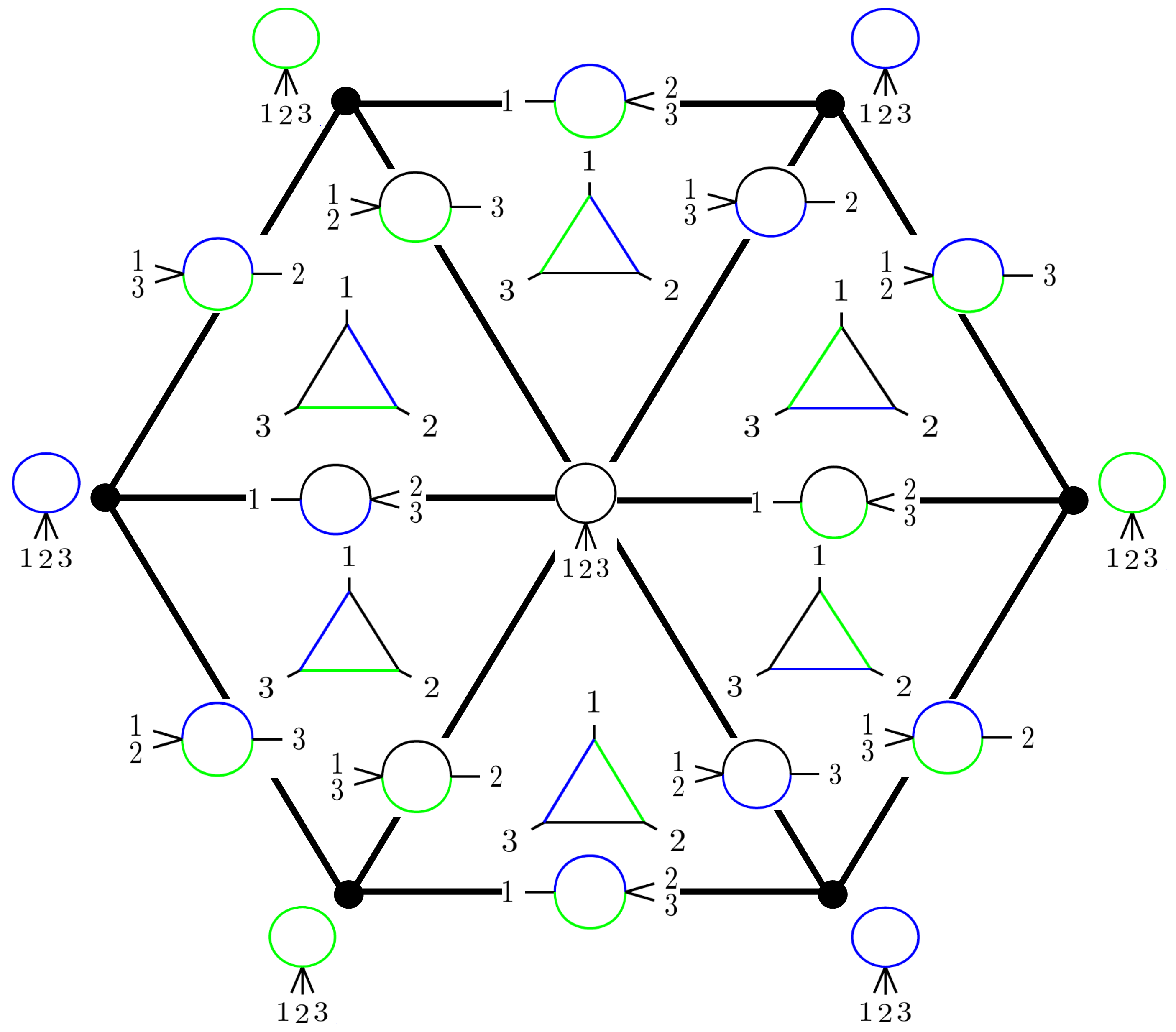}
	\caption{A geometric representation of $\ti {X}_3$ as a $\Delta$-complex.}
	\label{fig:s3r}
\end{figure}

\begin{example}
While for $s=1,2$ the space $\ti X_s$ is just a single point or a single edge, respectively, the space $\ti X_3$ is homeomorphic to the two-dimensional torus $T^2$. Its $\Delta$-complex structure consists of six 2-simplices, nine 1-simplices and 3 vertices with identifications as depicted in Figure \ref{fig:s3r}.
\end{example}

As in the previous case the homology of the moduli space of holocolored graphs can be calculated with the help of a computer program. 
The dimensions of the homology groups of the spaces $\ti X_s$ that could be obtained are listed in Table \ref{t:hom_dirk} for $1\leq s\leq 5$. A particular choice of generators for these groups can be found in \cite{mmm}.

\begin{table}[h]
\centering
\begin{tabular}{ c || c | c | c | c | c }
   & $H_0$ & $H_1$ & $H_2$ & $H_3$ & $H_4$\\
	\hline\\[-1em]
$\ti X_1$ & 1 & - & - & - & - \\
$\ti X_2$ & 1 & 0 & - & - & - \\
$\ti X_3$ & 1 & 2 & 1 & - & - \\
$\ti X_4$ & 1 & 0 & 36 & 3 & - \\
$\ti X_5$ & 1 & 0 & 6 & 824 & 12 \\
\end{tabular}
\caption{The dimension of the homology groups $H_k(\ti X_s;\mb Q)$ for $1\leq s\leq5$ and $0\leq k\leq4$.}
\label{t:hom_dirk}
\end{table}

\subsubsection{Homology in the highest non-trivial dimension} The groups $H_{s-1}(\ti X_s;\mb Z)$ can be determined explicitly by merging the simplices in the $\Delta$-decomposition of $\ti X_s$ into larger cubes. 

\begin{prop}
For $s\geq3$ we have 
\begin{equation}\label{eq:homrainbow}
 H_{s-1}(\ti X_s; \mb Z)\cong\mathbb{Z}^\frac{(s-1)!}{2}.
\end{equation}
\end{prop}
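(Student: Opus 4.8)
The plan is to compute the group directly from the semi-simplicial chain complex, exploiting that $\dim \ti X_s = s-1$ so that $C_s=0$ and hence $H_{s-1}(\ti X_s;\mb Z)=\ker\partial_{s-1}$, a free abelian group whose rank is all that remains to determine. First I would identify the cells in the top two dimensions. Since $C=\set s$ and an admissible rank one graph on $s$ edges must be an $s$-gon with exactly one leg at each (hence trivalent) vertex, the top cells are precisely the holocolored $s$-gons: an $s$-cycle with a distinct leg at each vertex and a bijective coloring $c\colon E\to\set s$ of its edges. A codimension one cell is a holocolored $(s-1)$-gon with one four-valent vertex carrying two legs, the others trivalent, and a coloring by $s-1$ of the colors, exactly one color $\mu$ missing.

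The key combinatorial observation I would establish next is that every codimension one cell $H$ is a face of exactly two top cells. Indeed, writing $G/e=H$ forces $e$ to be the $\mu$-colored edge whose endpoints carry the two legs $\{a,b\}$ merged at the four-valent vertex of $H$; reinserting this edge amounts to splitting that vertex, which can be done in the two cyclic orders $a,b$ and $b,a$. The two resulting top cells $G_1,G_2$ therefore differ precisely by transposing two cyclically adjacent legs while keeping all edge colors in place, and one checks they are never isomorphic (a coincidence would require a nontrivial dihedral symmetry fixing the bijective coloring, which is impossible for $s\ge 3$). Consequently the top cells, together with the relation ``sharing a codimension one face'', form a graph in which the cyclic sequence of colors is an invariant while the legs may be moved by arbitrary adjacent transpositions. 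Fixing a representative color sequence $\kappa$, every isomorphism class with a given color bracelet has a unique representative $(\kappa,\sigma)$ with $\sigma\in S_s$, and the adjacency graph on a bracelet is exactly the Cayley graph of $S_s$ with the $s$ cyclically adjacent transpositions as generators, which is connected. Hence the connected components are in bijection with color bracelets, i.e.\ cyclic sequences of $s$ distinct colors up to rotation and reflection, of which there are $\tfrac{s!}{2s}=\tfrac{(s-1)!}{2}$; each component contains $s!$ top cells.

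Since each codimension one cell meets only top cells lying in a single component, the defining equations of $\ker\partial_{s-1}$ decouple over components, so it suffices to show that each component contributes a single $\mb Z$. Ordering the edges of every graph by color makes the contraction of the $\mu$-colored edge occur in the same position in $\partial_{s-1}(\kappa,\sigma)$ and $\partial_{s-1}(\kappa,\sigma(i\,i{+}1))$, so the face $H$ appears with the same sign in both; the cycle condition at $H$ thus reads $a_\sigma=-a_{\sigma(i\,i{+}1)}$. As $(i\,i{+}1)$ is odd, the assignment $a_\sigma=\operatorname{sgn}(\sigma)$ solves all of these equations simultaneously, and any loop in the Cayley graph has even length, so the conditions are consistent and the solution space on each component is exactly one dimensional. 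This exhibits each component as a closed orientable pseudomanifold with fundamental cycle $\sum_{\sigma\in S_s}\operatorname{sgn}(\sigma)\,(\kappa,\sigma)$; summing over bracelets gives $\ker\partial_{s-1}\cong\mb Z^{(s-1)!/2}$, which is \eqref{eq:homrainbow}. Geometrically these components are the cubical tori obtained by merging the simplices as indicated; for $s=3$ the single component recovers $\ti X_3\cong T^2$.

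The step I expect to be the main obstacle is the orientation and sign bookkeeping of the previous paragraph: one must fix consistent orientations on the simplices (or cubes) so that the incidence numbers across each shared face are genuinely opposite, including for the wrap-around transposition $(s\,1)$, and confirm that the gluing of the two faces introduces no extra sign. The color-ordering convention together with the oddness of transpositions is what makes $\operatorname{sgn}(\sigma)$ work, but verifying this carefully---rather than the counting, which is routine once the adjacency structure is in hand---is the crux.
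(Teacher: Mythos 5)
Your argument is correct, and it takes a genuinely different route from the paper's. The paper re-parametrizes $\ti X_s$ geometrically: a point is a circle with a marked basepoint, $s-1$ angles recording the positions of the remaining legs, and a permutation recording the colors. This merges the simplices into $(s-1)$-cubes $w_{[\sigma]}$ indexed by $[\sigma]\in\Sigma_s/\mb Z_2$, and the observation that letting a leg travel once around the circle induces the cyclic shift $\tau_+$ turns the top boundary into $\partial_{s-1}w_{[\sigma]}=\partial^+_{s-1}w_{[\sigma]}-\partial^+_{s-1}w_{[\tau_+\sigma]}$, i.e.\ a block-diagonal linear system with one circulant-type block of rank $s-1$ for each class in $\Sigma_s/D_s$. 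You instead stay inside the semi-simplicial complex and analyze the dual adjacency graph of the $(s!)^2/2s$ top simplices: the pseudomanifold property (each codimension-one cell lies in exactly two top cells, which your dihedral-symmetry check for $s\geq 3$ justifies), the identification of each adjacency component with the Cayley graph of $\Sigma_s$ on the $s$ cyclically adjacent transpositions, and the bipartiteness of that graph to solve the resulting $a_{\sigma}=-a_{\sigma(i\,i+1)}$ system. Both proofs ultimately locate $\tfrac{(s-1)!}{2}$ as $|\Sigma_s/D_s|$. The paper's version buys an explicit geometric/cubical picture (which is what makes the torus of Figure \ref{fig:s3r} transparent), while yours is purely combinatorial, exhibits the generators $\sum_{\sigma}\operatorname{sgn}(\sigma)\,(\kappa,\sigma)$ as explicit fundamental cycles, and correctly isolates and resolves the one delicate point — the incidence signs — via the order-edges-by-color convention, under which the shared face occurs in the same position $\mu$ in both boundaries so that the two incidence numbers coincide and the equations reduce to the sign character.
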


\begin{proof}
We describe a cubical chain complex that computes $H_{s-1}(\ti X_s;\mb Z)$.

A point $(G,\lambda,c) \in \ti X_s$ can be thought of as embedded in $\mathbb{R}^2$ by drawing a circle of finite radius (which is $\frac{1}{2\pi}$ for normalized graphs) to represent the union of all of its edges. Fix an arbitrary point on the circle and identify it with a leg/vertex $v_0\in V(G)$. The length of each edge is uniquely determined by fixing $s-1$ angles $\theta_1,\ldots,\theta_{s-1}$ representing the position of the legs/vertices $v_1,\ldots,v_{s-1}\in V(G)$ on the circle with respect to the position of the distinguished vertex $v_0$. For convenience we choose a parametrization such that angles $\theta_i$ reach from 0 to 1.

In this way $(G,\lambda,c)$ can be represented by a tuple $(\theta_1,\ldots,\theta_{s-1},\sigma)$ where $\sigma\in \Sigma_s$ encodes the coloring $c$. This representation is not unique, since we have
\begin{equation*}
(\theta_1,\ldots,\theta_{s-1},\sigma) \sim (\theta_{s-1},\ldots,\theta_1,\sigma^-)
\end{equation*}
where $\sigma^-$ denotes the permutation $\bigl(\begin{smallmatrix}1 & 2 & \ldots & s\\\sigma_1 & \sigma_s & \ldots &  \sigma_2 \end{smallmatrix}\bigr)$.

This defines a cubical complex in which each $[\sigma] \in \Sigma_s / \mb{Z}_2$ designates an $s-1$ dimensional cube $w_{[\sigma]}:=(\theta_1,\ldots,\theta_{s-1},[\sigma])$. On the associated chain complex the cubical boundary operator $\partial=\partial^+ + \partial^-$ is then given by
\begin{equation*}
\partial^+ w_{[\sigma]}=\sum_{i=1}^{s-1}(-1)^{i-1}(\theta_1,\ldots,\theta_{i-1},0,\theta_{i+1},\ldots,\theta_{s-1},[\sigma])
\end{equation*}
and
\begin{equation*}
\partial^- w_{[\sigma]} =\sum_{i=1}^{s-1}(-1)^i (\theta_1,\ldots,\theta_{i-1},1,\theta_{i+1},\ldots,\theta_{s-1},[\sigma]).
\end{equation*}

To describe the face relations in this complex we define a ``shuffling'' operator $\tau_+: \Sigma_s \rightarrow \Sigma_s$ by
\begin{equation*}
\tau_+ \sigma := \tau_+ \bigl(\begin{smallmatrix}1 & 2 & \cdots & s \\\sigma_1 & \sigma_2 & \cdots &  \sigma_s\end{smallmatrix}\bigr)= \bigl(\begin{smallmatrix}1 & 2 & \cdots & s-1 & s \\\sigma_2 & \sigma_3 & \cdots &  \sigma_s & \sigma_1\end{smallmatrix}\bigr).
\end{equation*}

Letting any external leg rotate once around the graph leads to a cyclic permutation of the edges. More precisely, for any $[\sigma]\in \Sigma_s / \mb Z_2$ we have
\begin{equation*}
(\theta_1,\ldots,\theta_{i-1},0,\theta_{i+1},\ldots,\theta_{s-1},[\sigma])=(\theta_1,\ldots,\theta_{i-1},1,\theta_{i+1},\ldots,\theta_{s-1},[\tau_+\sigma]).
\end{equation*}
 This immediately yields
\begin{equation}\label{eq:cubesrainbow}
\partial_{s-1} w_{[\sigma]}  = \partial_{s-1}^+ w_{[\sigma]}  + \partial_{s-1}^- w_{[\sigma]} = \partial_{s-1}^+ w_{[\sigma]} - \partial_{s-1}^+w_{[\tau_+\sigma]}.
\end{equation}

The highest non-trivial homology group $H_{s-1}(\ti X_s;\mb Z)$ is just $\ker\partial_{s-1}$ and equation \eqref{eq:cubesrainbow} sets up a linear system of equations to determine this kernel. To make this explicit, we quotient out the cyclic permutations generated by $\tau_+$. The quotient group is thus $(\Sigma_s/\mb Z_2)/C_s\cong \Sigma_s/(C_s\ltimes\mathbb{Z}_2)\cong \Sigma_S/D_s$ for $D_s$ the dihedral group. It consists of $n(s):=\frac{s!}{2s}=\frac{(s-1)!}{2}$ equivalence classes and we choose a representative $\sigma_1,\ldots,\sigma_{n(s)}$ for each one.

The matrix representation of $\partial_{s-1}$ with respect to the basis
\begin{equation*}
\left\{ \partial_{s-1}^+w_{[\sigma_i]},\partial_{s-1}^+w_{[\tau_+\sigma_i]},\ldots,\partial_{s-1}^+w_{[\tau_+^{s-1}\sigma_i]} \right\}_{i=1, \ldots, n(s)}
\end{equation*}
of the target space $C_{s-2}(\ti X_s)$ then reads
\begin{equation*}
A_{\partial_{s-1}}=\left(\begin{matrix} 
A 		& 			& 			& \\
		& A 		& 			& \\
		&   		& \ddots	& \\
 		& 			& 			& A
\end{matrix}\right),
\end{equation*}
containing $n(s)$ copies of
\begin{equation*}
A=\left(\begin{matrix} 
1 		& 0			& 0			& \cdots	& 0			& -1\\
-1		& 1 		& 0			& \cdots	& 0			& 0\\
0		& -1 		& 1			& \cdots	& 0			& 0\\
\vdots	& \vdots 	& \vdots	& \ddots	& \vdots	& \vdots\\
0 		& 0			& 0			& \cdots	& 1			& 0\\
0		& 0			& 0			& \cdots	& -1		& 1
\end{matrix}\right).
\end{equation*}

The matrix $A$ can easily be brought into row-echolon form and reveals its rank to be $\mathrm{rank}(A)=s-1$. Thus, the space of solutions of the homogeneous system defined by $A_{\partial_{s-1}}$ is $\frac{(s-1)!}{2}$-dimensional and we obtain
\begin{equation*}
H_{s-1}(\ti{X}_{1,s};\mb Z)\cong\mathbb{Z}^\frac{(s-1)!}{2}.
\end{equation*}
\end{proof}

\subsubsection{Permuting colors} 
Let $\Sigma_C:=\mathrm{Perm}(C)\cong \Sigma_{3(n-1)+s}$ denote the group of permutations of the set of colors $C$. It acts on $\m {MHG}_{n,s}$ by changing the coloring, for $g \in \Sigma_C$ by $g.(G,\lambda,c):=(G,\lambda,g \circ c)$.

This action respects the ``relative semi-simplicial structure'' by mapping open $k$-simplices to open $k$-simplices. Furthermore, it does so transitively on each set of open $k$-simplices in $\m {MCG}_{n,s}$. 
The stabilizer of a point $(G,\lambda,c)$ depends only (up to isomorphism) on the number of edges of its representing graph $G$; if $G$ has $k$ edges we find its stabilizer as the set of permutations that act only on the colors not in the image of $c$, 
\begin{equation*}
\forall x=(G,\lambda,c) \in \m {MCG}_{n,s}: \ {\Sigma_C}_x \cong \mathrm{Perm}(C \setminus \mathrm{im}(c))\cong \Sigma_{3(n-1)+s-k}.
\end{equation*}

This shows that the orbit space of this action is the moduli space of uncolored graphs $\m {MG}_{n,s}$ and the projection $\pi:  \m {MCG}_{n,s} \rightarrow \m {MCG}_{n,s}  / \Sigma_C = \m{MG}_{n,s}$ is a \textit{branched covering map}, i.e.\ a covering map outside a nowhere-dense set, in this case outside of $B=\{ (G,\lambda,c) \in \m {MCG}_{n,s} \mid |E(G)|< 3(n-1)+s \}$. 

For $n=1$ this means that if we decompose $\ti X_s$ into a simplicial complex, for instance by performing two barycentric subdivisions, then the action of $\Sigma_C \cong \Sigma_s$ is simplicial, i.e.\ for every $g$ in $\Sigma_C$ the map $v \mapsto g.v$ is \textit{simplicial}.\footnote{A map between two simplicial complexes $f:K \rightarrow K'$ is \textit{simplicial} if it sends every simplex in $K$ to a simplex in $K'$ by a map taking vertices to vertices.}

In that case the projection $\pi: \ti X_s \rightarrow  \ti X_s / \Sigma_C  = \m{MG}_{1,s}$ that forgets the coloring is a simplicial branched covering map, i.e.\ a simplicial covering map outside the nowhere-dense set $\ti X^{(s-2)}_s$, the $(s-2)$-skeleton of $\ti X_s$. 

Moreover, we have in complete analogy to Proposition \ref{prop:surj}
\begin{prop}\label{prop:projhc}
 The quotient map $\pi: \m {MHG}_{n,s} \rightarrow  \m{MG}_{n,s} $ that forgets the coloring of edges induces a surjection $\pi_*$ on homology.
\end{prop}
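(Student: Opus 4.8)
The plan is to mimic the proof of Proposition \ref{prop:surj}, exhibiting at the level of the cubical chain complexes computing rational homology an explicit section of the color-forgetting chain map. Write $N := |C| = 3(n-1)+s$ and, for an admissible graph $G$, set $k := |E(G)| \le N$; the number of holocolorings of $G$, i.e.\ injections $c : E(G) \to C$, is the falling factorial $\frac{N!}{(N-k)!}$. Let $\pi_{\#} : C_*(\m{MHG}_{n,s}) \to C_*(\m{MG}_{n,s})$ be the chain map induced by $\pi$, which on a generating cube forgets the coloring, $(G,F,c) \mapsto (G,F)$. In the other direction define
\begin{equation*}
\iota : C_*(\m{MG}_{n,s};\mathbb{Q}) \to C_*(\m{MHG}_{n,s};\mathbb{Q}), \qquad (G,F) \mapsto \frac{(N-k)!}{N!} \sum_{c} (G,F,c),
\end{equation*}
the sum ranging over all holocolorings $c$ of $G$. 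Applying $\pi_{\#}$ termwise returns $(G,F)$ from each of the $\frac{N!}{(N-k)!}$ summands, so $\pi_{\#} \circ \iota = \mathrm{id}$ on chains, whence $\pi_* \circ \iota_* = \mathrm{id}$ on homology and surjectivity of $\pi_*$ follows at once.

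The one point requiring verification --- the analogue of the ``short calculation'' invoked for Proposition \ref{prop:surj} --- is that $\iota$ is a chain map. The forest-edge deletion terms of the boundary \eqref{eq:boundarycol} leave both $G$ and its edge count $k$ unchanged, so they commute with $\iota$ on the nose. For a contraction term $(G/e_i, F/e_i, c_{e_i})$ the underlying graph loses one edge, and as $c$ ranges over the $\frac{N!}{(N-k)!}$ holocolorings of $G$, the restriction $c_{e_i}$ runs through every holocoloring of $G/e_i$ exactly $N-k+1$ times, the freedom being the now-unused color of $e_i$. Since $\frac{N!}{(N-k)!} = (N-k+1)\cdot \frac{N!}{(N-(k-1))!}$, the weights match and $\partial \iota = \iota \partial$. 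This identity is exactly what replaces the relation $m^{k} = m\cdot m^{k-1}$ used in the $m$-colored case.

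Conceptually the same result also follows from the branched-covering picture of the preceding subsection. After subdividing so that the $\Sigma_C$-action is simplicial and regular, the quotient complex computing $H_*(\m{MG}_{n,s};\mathbb{Q})$ is the complex of $\Sigma_C$-coinvariants of $C_*(\m{MHG}_{n,s};\mathbb{Q})$, which over $\mathbb{Q}$ is identified with the invariants via the norm map; under this identification $\pi_*$ becomes the averaging projection onto $H_*(\m{MHG}_{n,s};\mathbb{Q})^{\Sigma_C}$, a retraction and hence surjective. I would present the explicit section as the main argument, as the text's reference to Proposition \ref{prop:surj} suggests, and record the transfer description as the conceptual explanation.

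I expect the main obstacle to lie not in the core of either argument but in the bookkeeping forced by graph automorphisms: when several holocolorings of $G$ fall into a single isomorphism class, the summands of $\iota(G,F)$ coincide and their orbit sizes must be tracked, exactly as in Proposition \ref{prop:surj}. The falling-factorial normalization is designed precisely to absorb these multiplicities, and the step to treat with care is checking that it does so uniformly across all cells --- including those on the branch locus $B$, where the stabilizers $\mathrm{Perm}(C\setminus\mathrm{im}(c))$ are nontrivial --- so that $\pi_{\#}\circ\iota=\mathrm{id}$ holds after passing from colorings to isomorphism classes.
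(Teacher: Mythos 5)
Your proposal is correct and follows essentially the same route as the paper, which likewise defines a chain-level right inverse to $\pi_{\#}$ by averaging each cube $(G,F)$ over all holocolorings with a normalization equal to the reciprocal of the number of colorings (the paper writes $\kappa_G=\binom{|C|}{|E(G)|}$ and gives no further detail). Your version is in fact more careful: you verify the chain-map identity via the count $\frac{N!}{(N-k)!}=(N-k+1)\cdot\frac{N!}{(N-(k-1))!}$ and flag the automorphism bookkeeping, neither of which the paper spells out.
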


\begin{proof}
On the cubical level the right-inverse to $\pi_*$ is induced by the map 
\begin{equation*}
 i:C_*(\m{MG}_{n,s}) \longrightarrow C_*(\m {MHG}_{n,s}), \ (G,F) \longmapsto \sum_{\text{all holocolorings $c$}} \frac{1}{\kappa_G}(G,F,c)
\end{equation*}
for $\kappa_G=\binom{|C|}{|E(G)|}$ the number of holocolorings of $G$.
\end{proof}

\subsubsection{The Euler characteristic of $\ti X_{s}$} The number of $k$-simplices in the moduli space of holocolored graphs can be determined by combinatorial means. 

For an admissible graph $G$ with $k$ internal edges, denote the set of all holocolorings (with $s$ available colors) of $G$ by $\mathcal{C}_{k,s}(G)$, and the set of its non-equivalent external leg structures with $s$ legs by $\mathcal{S}_{k,s}(G)$.

For an one-loop holocolored graph $G$ with $k$ internal edges there are $\binom{s}{k}$ ways to choose which colors the edges of $G$ can have. Furthermore, there are $k!$ ways to permute these edges to get different graphs, except in the case $k=2$ where both permutations of edges yield the same graph. Thus, 
\begin{equation}\label{eq:numcol}
|\mathcal{C}_{k,s}(G)|=\binom{s}{k}\frac{k!}{1+\delta_{k,2}}=\frac{s!}{(s-k)!(1+\delta_{k,2})}.
\end{equation}

The leg structure is uniquely determined by a partition of the $s$ legs into $k$ non-empty groups and an element of $S_k/(C_k\ltimes\mathbb{Z}_2)$ which represents their ordering. There are $\stirling{s}{k}$ number of ways to choose such a partition where $\stirling{\cdot}{\cdot}$ denotes the Stirling number of the second kind \cite{handbook},
\begin{equation*}
\stirling{n}{k}=\frac{1}{k!}\sum_{j=0}^{k}(-1)^{k-j}\binom{k}{j}j^n.
\end{equation*}

For $k\geq 3$ the group $S_k/(C_k\ltimes\mathbb{Z}_2)$ has $\frac{k!}{2}\frac{1}{k}=\frac{(k-1)!}{2}$ elements. Thus, there are exactly this many ways to order the $s$ legs to yield distinct graphs. For $k=1$ and $k=2$ there is clearly only one way to organize the legs. Hence,
\begin{equation}\label{eq:numleg}
|\mathcal{S}_{k,s}(G)|=\stirling{s}{k}\frac{(k-1)!}{2}(1+\delta_{k,1}+\delta_{k,2}).
\end{equation}

With this the number of ($k-1$)-simplices $N_{k,s}$ in $\ti X_s$ can be calculated. Each such ($k-1$)-simplex belongs to a one-loop holocolored graph with $k$ edges and $s$ legs. There are $|\mathcal{C}_{k,s}(G)|\cdot|\mathcal{S}_{k,s}(G)|$ such graphs since the choices of coloring and leg structure are independent, so by \eqref{eq:numcol} and \eqref{eq:numleg}
\begin{equation*}
N_{k,s}	=\binom{s}{k}k!\stirling{s}{k}\frac{(k-1)!}{2}(1+\delta_{k,1})=\frac{(k-1)!}{2-\delta_{k,1}}\binom{s}{k}\sum_{j=0}^{k}(-1)^{k-j}\binom{k}{j}j^s.
\end{equation*}

This in turn can be used to calculate the Euler characteristic of $\ti X_s$,
\begin{align*}
\chi(\ti X_s) &=\sum_{k=1}^{s}(-1)^{k-1}N_{k,s}\\
						&=\sum_{k=1}^{s}(-1)^{k-1}\frac{(k-1)!}{2-\delta_{k,1}}\binom{s}{k}\sum_{j=0}^{k}(-1)^{k-j}\binom{k}{j}j^s\\
						&=\sum_{k=1}^{s}\sum_{j=0}^{k}(-1)^{j+1}\frac{j^s(k-1)!}{2-\delta_{k,1}}\binom{s}{k}\binom{k}{j}.
\end{align*}

For $1\leq s\leq 8$ the Euler characteristic obtained by this formula can be found in Table \ref{t:euler}.
\begin{table}
\centering
\begin{tabular}{ c || c | c | c | c | c | c | c | c  }
$s$ 					& 1 & 2 & 3 & 4 & 5 & 6 & 7 & 8 \\
\hline\\[-1em]
$\chi(\ti X_s)$ & 1 & 1 & 0 & 34 & -805 & 26541 & -1122506 & 59485588 \\
\end{tabular}
\caption{The Euler characteristic of $\ti X_s$ for $1\leq s\leq 8$}
\label{t:euler}
\end{table}

\subsection{Graphs with remembered edges}\label{ss:re}
When considering Feynman graphs in the operator product expansion \cite{lh75,iz}, an edge collapsed to zero length still carries the physical information assigned to it (for instance, its mass or particle type). More precisely, the vertex it gets identified with describes a new type of interaction, depending on the type of the contracted edge. Thus, from a physics perspective it is worthwhile to consider an alternative complex in which face relations respect this restriction. 

Recall the definition of $\m {MRG}_{n,s}$ where again $n$ is the number of loops, $s$ the number of external legs and the number of colors is taken to be the maximal number of internal edges that can occur, $|C|=3(n-1)+s$. Note that we consider the same ground set of holocolored graphs but identify points with respect to a different relation $\sim_*$ given by
\begin{equation*}
(G,\lambda,c) \sim_* (G',\lambda',c') \Longleftrightarrow \exists \varphi:G \longrightarrow G' \text{ homothety s.t.\ } c= c' \circ \varphi.
\end{equation*}

\subsubsection{The homology of $\m {MRG}_{1,s}$}
In the following let $n=1$ and denote by $\bar X_s$ the space $\m {MRG}_{1,s}$. Up to $s=3$ there is no difference between this space and the moduli space of holocolored graphs with ``forgetful edges''. A difference can only occur if there are at least two vertices to which more than one leg is connected. In particular, $H_{s-1}(\bar{X}_s)\cong H_{s-1}(\ti X_s)$ which is given by equation \eqref{eq:homrainbow}. Hence,
\begin{equation*}
H_{s-1}(\bar{X}_s;\mb Z)\cong\mathbb{Z}^\frac{(s-1)!}{2}.
\end{equation*}

Moreover, note that an analogous statement to Proposition \ref{prop:projhc} holds also for $\bar{X}_s$ or $\m {MRG}_{n,s}$. Here the projection $\pi: \m {MRG}_{n,s} \rightarrow \m {MG}_{n,s}$ forgets the coloring of edges and the type of vertices. 

For computer aided calculation of the rational homology of $\bar{X}_s$ the representation of a simplex has to be slightly modified to account for the new face relations induced by $\sim_*$. The color of shrunken edges is now relevant when identifying faces of different simplices. We therefore add a weight to each vertex that has more than one leg attached. This weight is the set of colors of the edges that were collapsed to that vertex (i.e.\ one for each additional leg). 

The homology groups in the one-loop case for up to five legs that were calculated with computer assistance are listed in Table \ref{t:hom_dirk2}. An explicit choice of generators can again be found in \cite{mmm}.
\begin{table}[h]
\centering
\begin{tabular}{ c || c | c | c | c | c }
   & $H_0$ & $H_1$ & $H_2$ & $H_3$ & $H_4$\\
	\hline\\[-1em]
$\bar{X}_1$ & 1 & - & - & - & - \\
$\bar{X}_2$ & 1 & 0 & - & - & - \\
$\bar{X}_3$ & 1 & 2 & 1 & - & - \\
$\bar{X}_4$ & 1 & 0 & 18 & 3 & - \\
$\bar{X}_5$ & 1 & 0 & 48 & 166 & 12 \\
\end{tabular}
\caption{The dimension of the homology groups $H_k(\bar{X}_s;\mb Q)$ for $1\leq s\leq5$ and $0\leq k\leq4$.}
\label{t:hom_dirk2}
\end{table}

\subsubsection{The Euler characteristic of $\bar{X}_s$}

We give an explicit formula for the Euler characteristic of $\bar{X}$ by explicitly counting the simplices in each dimension.

Let $k\geq1$ and consider the one-loop graph $G$ with $k$ vertices and no external legs. We start by fixing one coloring of the $k$ internal edges of this graph. There are $\binom{s}{k}$ choices of $k$ colors out of the total $s$ and for each such choice there are $\frac{1}{2}(1+\delta_{1,k})(k-1)!$ distinct ways to distribute these colors on the internal edges of $G$. Now we add the legs and the remaining colors. The number of ways to distribute the latter set depends on the partition of the former set induced by the vertices. Let $i_1,\ldots,i_k$ be the number of legs attached to the $k$ vertices, ordered by $1\leq i_1\leq\cdots\leq i_k$. There are $\binom{s-k}{i_1-1}$ choices for the colors at the vertex corresponding to $i_1$, $\binom{s-k-(i_1-1)}{i_2-1}$ for the one corresponding to $i_2$, and so on. Multiplying all these contributions yields a factor of $\frac{(s-k)!}{\prod_{n=1}^{k}(i_n-1)!}$. Furthermore, there are $\binom{s}{i_1}$ ways to label the legs at the vertex corresponding to $i_1$, $\binom{s-i_1}{i_2}$ for $i_2$, and so on. This leads to an additional factor of $\frac{s!}{\prod_{n=1}^{k}i_n!}$. Finally, we have to count all the ways to distribute one of these partitions of legs among the $k$ vertices. There are $k!$ ways to do this but since we already included all ways of labelling the legs, we need to consider all such ways that differ only by a permutation of vertices with equal valency as identical. Thus, with
\begin{align*}
g(i_1,\ldots,i_k):=\frac{k!}{\prod_{n\in\{i_1,\ldots,i_k\}}|\{m\in\{1,\ldots,k\}\;|\;i_m=n \}|!}
\end{align*}
we obtain the number of holocolored one-loop graphs with remembered edges with $k$ vertices and $s$ legs $N_{s,k}$ as
\begin{align*}
N_{s,k}&=(1+\delta_{1,k})\frac{(k-1)!}{2}\binom{s}{k}\sum_{\substack{1\leq i_1\leq\cdots\leq i_k \\ \sum_{n=1}^{k}i_n=s}} \frac{s!(s-k)!}{\prod_{n=1}^{k}i_n!(i_n-1)!}g(i_1,\ldots,i_k)\\
&=(1+\delta_{1,k})\frac{(s!)^2}{2k}\sum_{\substack{1\leq i_1\leq\cdots\leq i_k \\ \sum_{n=1}^{k}i_n=s}}\frac{g(i_1,\ldots,i_k)}{\prod_{n=1}^{k}i_n!(i_n-1)!}.
\end{align*}

The Euler characteristic $\chi(\bar{X}_s)$ can now readily be obtained by summation over all $k$ with alternating signs,
\begin{align*}
\chi(\bar{X}_s)=(s!)^2\sum_{k=1}^{s}\frac{(-1)^{k-1}(1+\delta_{1,k})}{2k}\sum_{\substack{1\leq i_1\leq\cdots\leq i_k \\ \sum_{n=1}^{k}i_n=s}}\frac{g(i_1,\ldots,i_k)}{\prod_{n=1}^{k}i_n!(i_n-1)!}.
\end{align*}

\bibliography{ref}
\bibliographystyle{alpha}

\end{document}